\theoremstyle{plain}
\newtheorem{theorem}{Theorem}[section]
\newtheorem{corollary}[theorem]{Corollary}
\newtheorem{lemma}[theorem]{Lemma}
\newtheorem{definition}[theorem]{Definition}
\newtheorem{example}[theorem]{Example}
\newtheorem{remark}[theorem]{Remark}
\numberwithin{equation}{section}
\def\wh{\widehat}
\def\F{{\Bbb F}} \def\E{{\Bbb E}}  \def\C{{\Bbb C}}
\def\al{\alpha} \def\be{\beta} \def\ga{\gamma} \def\lam{\lambda}
\def\scr{\mathscr}
\begin{document}

\title{\bf Sharp Uncertainty Principle for Transitive
$G$-Sets over Arbitrary Fields and Finite Groups}

\author{
Bocong Chen$^1$, Yun Fan$^2$  and Gaojun Luo$^3$\footnote{Email addresses:
mabcchen@scut.edu.cn (B. Chen),
yfan@ccnu.edu.cn (Y. Fan)
 and
gaojun\_luo@nuaa.edu.cn (G. Luo).
}}

\date{\small
$1.$ School of Mathematics, South China University of Technology, Guangzhou  510641, China\\
$2.$  School of Mathematics and Statistics, Central China
Normal University, Wuhan 430079, China\\
$3.$ School of Mathematics, Nanjing University of Aeronautics and Astronautics, Nanjing 210016, China\\
}

\maketitle

\begin{abstract}
For any finite group $G$, any transitive $G$-set $X$ and any field $\F$,
we consider the vector space $\F^X$ of all functions from $X$ to $\F$,
which is a $G$-space isomorphic to the permutation $\F G$-module $\F X$.
When the group algebra $\F G$ is semisimple and split over $\F$,
we find a specific basis $\wh X$ of $\F^X$ and construct the
Fourier transform   $\F^X\to \F^{\wh X}$, $f\mapsto\wh f$.
We define the rank support $\mbox{rk-supp}(\wh f\,)$ and
prove that $\mbox{rk-supp}(\wh f)=\dim \F G f$,
where $\F G f$ is the submodule of the permutation module $\F X$
generated by the element $f=\sum_{x\in X}f(x)x$.
Next, we extend and strengthen  the sharpened uncertainty principle
for finite abelian groups,
originally established by Feng, Hollmann, and Xiang in 2019,
to a broader framework and a sharp version.
For any field $\F$, any transitive $G$-set $X$ and $0\ne f\in\F^X$,
we construct a block $X_{{\rm supp}(f)}$ of $X$ and a subset ${\scr S}'^{-\!1}$ of $G$
determined by the support ${\rm supp}(f)$ of $f$, and show that
$\dim\F Gf-\dim\F{\scr S}'^{-\!1}\!f\ge 1$ and
$$
  |{\rm supp}(f)|\cdot \dim\F Gf
  \ge |X|+ (\!\dim\F Gf-\dim\F{\scr S}'^{-\!1}\!f)
  \cdot|{\rm supp}(f)|  -|X_{{\rm supp}(f)}|,
$$
where ${\scr S}'^{-\!1}\!f=\{\al f\,|\,\al\in{\scr S}'^{-\!1}\}\subseteq\F X$,
and $\F{\scr S}'^{-1}\!f$ denotes the subspace of $\F X$ spanned by the subset
${\scr S}'^{-\!1}\!f$.
Furthermore, we provide necessary and sufficient
conditions for the above inequality to achieve equality.
As corollaries, we derive many sharpened or
classical versions of the finite-dimensional uncertainty principle,
in particular  addressing an open question
posed by Feng, Hollmann, and Xiang.
When $G$ is  of prime order and $X=G$, we give a lower bound on
$\dim\C Gf$ that recovers Tao's 2005 strong uncertainty principle, along with a precise characterization of the equality case in this scenario.

\medskip
{\bf Key words}: Finite group; group action;
support of function; Fourier transform; uncertainty principle.

\medskip
{\bf MSC}:  05E18, 20B05, 
20C15, 43A30, 
94B60.

\end{abstract}

\section{Introduction} \label{introduction}
Let $\F$ be a field and let $\C$ be the field of complex numbers.
Given a set $X$, denote its cardinality by $|X|$.
Let $\F^X$ denote the $\F$-vector space of all functions from $X$ to~$\F$.
For $f \in \F^X$, the support of $f$ is defined as
$${\rm supp}(f)=\{x\,|\,x\in X,\,f(x)\ne 0\}.$$
Throughout this paper, $G$ denotes a finite group.
We write $H \le G$ to indicate that $H$ is a subgroup of $G$.
The group algebra $\F G$ is the $\F$-vector space with basis $G$, equipped with multiplication defined by the group operation on $G$.
Let ${\rm Irr}(G)$ denote the set of all absolutely irreducible characters of $G$ (possibly taking values in an extension of $\F$).
For $\psi \in {\rm Irr}(G)$, let $n_\psi$ denote the degree of~$\psi$.

The term
 ``uncertainty principle" encompasses
several important theorems across various areas of
mathematics and physics, all asserting that there is
a trade-off between the localization of a function
and that of its Fourier transform.
The uncertainty principle in mathematics,
particularly in the context of finite groups and the Fourier transforms,
plays a significant role in understanding the
relationship between a function and its Fourier transform.
As noted in \cite{WW21}, the ``uncertainty principle''
refers to a class of theorems stating that a non-zero function and its Fourier transform
cannot both have small supports simultaneously.

The celebrated Donoho-Stark uncertainty principle in \cite{DS89}, \cite{Te99}
applies to a finite abelian group $G$
with $\F = \C$.
For each nonzero function $f \in \C^G$, its Fourier transform $\widehat{f} \in \C^{\widehat{G}}$ is defined by
\begin{align} \label{eq wh f=}
 \widehat{f}(\psi) = \sum_{x \in G} f(x) \psi(x),
\quad \text{for all } \psi \in \widehat{G},
\end{align}
where $\widehat{G} = {\rm Irr}(G)$ is the dual group of $G$.
This principle states that
\begin{align} \label{u p DS}
 \big|{\rm supp}(f)\big|\!\cdot\!\big|{\rm supp}(\wh f)\big| \ge |G|,
\end{align}
and the equality occurs if and only if $f=c\chi I_{\ga H}$, where $c\in\C$,
 $\chi\in{\rm Irr}(G)$, $H\le G$, $\ga\in G$ and $I_{\ga H}$ is the
indicator function of the coset $\ga H$ defined by
$I_{\ga H}(\al) =\left\{\!\!
\begin{array}{ll}1, &\al\in\ga H, \\ 0, & \al\notin\ga H.\end{array}\right.$

Tao \cite{Tao05}, along with Goldstein, Guralnick, and Isaacs \cite{GGI05}, independently established a stronger form of the
uncertainty principle, which we refer to in this paper as the {\em strong uncertainty principle}.
This principle was also independently explored by Bir\'{o} \cite{Bir} and Meshulam \cite{Me06}.
See \cite{Tao05} for details on the origin of this result. Specifically, if $G$ is a cyclic group of prime order $p$,
and $0 \neq f \in \C^G$ with $\widehat{f} \in \C^{\widehat{G}}$ being the Fourier transform of $f$, then
\begin{align} \label{s u p Tao}
\big|{\rm supp}(f)\big| + \big|{\rm supp}(\widehat{f})\big| \ge p+1.
\end{align}
A key aspect of the proof in \cite{Tao05} relies on a result by Chebotar\"ev concerning the minors
of the Fourier matrix $\big(\omega^{ij}\big)_{0 \leq i,j \leq p-1}$,
where $\omega$ is a primitive $p$-th root of unity (cf. \cite{Chebotarev,SM96}).
Notably, the conditions for equality in Equation \eqref{s u p Tao} were
not explicitly addressed in \cite{Bir,GGI05,Me06,Tao05}.
Garcia, Karaali, and Katz \cite{Ga} generalized the strong uncertainty principle, establishing a more robust version for $\chi$-symmetric functions.

The aforementioned studies focus on complex-valued functions.
Wigderson and Wigderson \cite{WW21} presented an elegant and unified approach to these results by utilizing various norms of linear operators.

The uncertainty principle and algebraic coding theory share a deep yet subtle historical connection. Donoho and Stark's seminal work explicitly employed the BCH bound, which is a fundamental result in algebraic coding theory for cyclic codes,
to establish a discrete uncertainty
principle for finite abelian (specifically cyclic) groups.
In coding theory, the finite field $\F$ serves as the alphabet,
while a word indexed by a finite group $G$ corresponds to a function $f \in \F^G$, which is in turn identified with the element $\sum_{x\in G}f(x)x\in\F G$.
A desirable codeword $f$ typically exhibits both a large Hamming weight
 and a large dimension $\dim\F Gf$ (cf. \cite{HP}),
where $\F Gf$ denotes the left ideal of $\F G$ generated by $f$.
The Hamming weight of the word $f$ is precisely
the cardinality $\big|{\rm supp}(f)\big|$ of the support of $f$.
On the other hand,
for a finite abelian group $G$ and a field $\F$ such that $\F G$ is semisimple
(equivalently, the conditions in Equation \eqref{condition semisimplicity} hold),
the Fourier transform  in Equation \eqref{eq wh f=} and
the uncertainty principle in Equation \eqref{u p DS}
still work well, and it is well-known that
\begin{align} \label{eq dim FGf}
\dim \F Gf=
\big|{\rm supp}(\wh f)\big|,\quad \mbox{for any}~ f\in \F^G.
\end{align}
Building on this perspective, recent works in \cite{BS22, EKL17} have leveraged the uncertainty principle to characterize and construct good codes.

Feng, Hollmann, and Xiang \cite{FHX19} provided a refined proof of the shift bound for abelian codes and subsequently generalized the Donoho-Stark uncertainty principle.
Let $G$ be an abelian group and let $\F$ be a field such that $\F G$ is semisimple.
For any nonzero $f \in \F^G$, they defined the stabilizer of~${\rm supp}(f)$ in~$G$ as:
\[
 G_{{\rm supp}(f)}=
  \big\{\,\al\;\big|\;\al\in G,\,\al\,{\cdot}\,{\rm supp}(f)={\rm supp}(f)\big\}.
\]
Notably, $G_{{\rm supp}(f)} \leq G$, and ${\rm supp}(f)$ decomposes
into a disjoint union of some cosets of $G_{{\rm supp}(f)}$
(our notation differs slightly from \cite{FHX19}).
They established the following sharpened uncertainty principle:
\begin{align}
\label{u p FHX} \big|{\rm supp}(f)\big| \cdot \big|{\rm supp}(\widehat{f})\big| \geq |G| + \big|{\rm supp}(f)\big| - |G_{{\rm supp}(f)}|.
\end{align}
From this inequality, both the Donoho-Stark uncertainty principle in Equation~\eqref{u p DS}
and its equality conditions readily follow.
As noted in \cite{FHX19}, while the equality case for the Donoho-Stark principle was fully characterized, characterizing the equality conditions for Equation \eqref{u p FHX} remains challenging. This question was explicitly left as an open problem for future research.

The initial motivation of this paper is to extend the sharpened uncertainty principle in Equation \eqref{u p FHX} to a more refined form and to study the conditions under which equality holds. Our discussion will be carried out in a broader framework.


Turning to the general case where $G$ is not necessarily abelian,
the Fourier transform  and its
inversion on the group algebra $\C G$ are described in \cite[\S6.2]{Serre}.
For each irreducible character $\psi \in {\rm Irr}(G)$,
fix a representation $\rho^\psi: G\to{\rm GL}_{n_{\!\psi}}\!(\C)$,
where ${\rm GL}_{n_{\!\psi}}\!(\C)$ is the general linear group of degree $n_\psi$ over $\C$,
 such that $\rho^\psi$ affords $\psi$.
For $\alpha \in G$, the representation is expressed as the invertible
 $n_\psi\!\times\! n_\psi$-matrix
$\rho^\psi(\alpha) = \big(\rho^\psi_{ij}(\alpha)\big)_{n_\psi \times n_\psi}$.
Let $\widehat{G}$ denote the set of all matrix entry
functions $\rho^\psi_{ij} \in \C^G$ for $\psi$ running over ${\rm Irr}(G)$, i.e.,
\begin{align} \label{eq wh G=}
\wh G=\big\{\,\rho^\psi_{ij}\;\big|\; 1\le i,j\le n_\psi,\,\psi\in{\rm Irr}(G)\big\}.
\end{align}
This set forms an orthogonal (but not orthonormal) basis for $\C^G$.
Similarly to Equation \eqref{eq wh f=},
the Fourier transform $\widehat{f} \in \C^{\widehat{G}}$ of $f \in \C^G$ is defined by
\begin{align} \label{eq wh f='}
\widehat{f}(\rho^\psi_{ij}) = \sum_{\alpha \in G} f(\alpha) \rho^\psi_{ij}(\alpha),
\quad \mbox{for any}~\rho^\psi_{ij} \in \widehat{G}.
\end{align}
However, the size $\big|{\rm supp}(\widehat{f})\big|$ depends on the choice of representations $\rho^\psi$.
To address this, Meshulam \cite{Me92} introduced the rank support of $\widehat{f}$, defined as
\begin{align} \label{eq rk-supp}
\mbox{rk-supp}(\wh f)\!=\!
\sum_{\psi\in{\rm Irr}(G)}\! n_\psi\!\cdot\!{\rm rank}\big(\wh f(\rho^\psi)\big),
\end{align}
where $\widehat{f}(\rho^\psi) = \big(\widehat{f}(\rho^\psi_{ij})\big)_{n_\psi \times n_\psi}$.
Meshulam proved (\cite{Me92}, see also \cite{WW21}) that, for any nonzero $f \in \C^G$,
\begin{align} \label{u p Me92}
 |{\rm supp}(f)|\cdot\mbox{rk-supp}(\wh f) \ge |G|,
\end{align}
with equality conditions analogous to the abelian case.

Let $G$ be a finite group and $\F$ a field with characteristic
${\rm char}\,\F$ satisfying
\begin{align} \label{condition semisimplicity}
{\rm char}\,\F = 0 \quad \text{or} \quad \gcd\big({\rm char}\,\F, |G|\big) = 1.
\end{align}
Under these conditions, the group algebra $\F G$ is semisimple.
Then the set $\wh G$ is constructed by extending the field $\F$ to a splitting field $\E$,
following the method of Equation \eqref{eq wh G=}.
This approach ensures the framework from Equations \eqref{eq wh f='} to \eqref{u p Me92} remains valid.

Goldstein, Guralnick, and Isaacs \cite{GGI05}
worked within a much broader context as follows. Let $G$ be a finite group, let $X$ be a transitive $G$-set
(cf. \cite[\S3]{AB}, or Remark \ref{rk G-set G-space}(1) below for a definition),
and let $\F$ be a field. Then the vector space $\F X$ with basis $X$ is an $\F G$-module
(called the permutation module, cf. Equation \eqref{eq alpha sum f(x) x} below). Each function $f\in\F^X$ is identified with the element
$\sum_{x\in X}f(x)x\in\F X$. 
Instead of ${\rm supp}(\wh f)$, they consider
the dimension $\dim\F G f$
of the $\F G$-submodule $\F Gf$ of $\F X$ generated by $f$,
and proved that
\begin{align} \label{u p X GGI}
 \big|{\rm supp}(f)\big|\cdot\dim \F Gf\ge|X|, \quad
\mbox{for any nonzero $f\in\F^X$}.
\end{align}
The conditions under which equality holds in
Equation~\eqref{u p X GGI} were also established.
Moreover, they proved that, if $X$ is a transitive $G$-set
with prime cardinality $|X|$ and ${\rm char}\,\F=0$, then
$$
\big|{\rm supp}(f)\big|+\dim\F Gf\ge |X|+1,\quad
\mbox{for any nonzero $f\in\F^X$}.
$$
In particular, if $X=G$ is the left regular set, then $G$ is a
cyclic group of prime order, Equation \eqref{eq dim FGf} directly yields the strong uncertainty principle stated in Equation \eqref{s u p Tao}.

In this paper we consider the uncertainty principles for any field $\F$, any finite group $G$
and any transitive $G$-set $X$.
This paper consists of two parts: firstly we establish
a Fourier transformation over $\F$ on the $G$-set $X$, and show that
for any nonzero $f\in\F^X$ the rank support of the Fourier transform $\wh f$
equals to the dimension $\dim\F Gf$ of
the submodule $\F Gf$ of $\F X$ generated by $f$.
The second part studies the uncertainty principles by the trade-off
between $|{\rm supp}(f)|$ and $\dim\F Gf$.

In Section 2, for any field $\F$ and any finite group $G$,
we introduce necessary preliminaries
about $G$-sets, function spaces and permutation modules.
Then, under the conditions
in Equation \eqref{condition semisimplicity}, we construct
$\wh G$ consisting of the representation matrix entry functions $\rho^\psi_{ij}$
as in Equation \eqref{eq wh G=}.
For any transitive $G$-set $X$, based on $\wh G$,
we construct the dual set $\widehat{X}$
which forms a specific basis of the
function space $\E^X$ ($\E$ is a splitting field for $G$ over $\F$).
Then, for $f\in\F^X\subseteq\E^X$, the Fourier transform
$\wh f\in\E^{\wh X}$ and its rank support
$\mbox{rk-supp}\big(\widehat{f}\,\big) $ are defined.
In particular, when $X = G$ is the left regular $G$-set, these recover
the standard Fourier transform $\wh f$ for $G$ and the rank support
as described in Equations \eqref{eq wh f='} and \eqref{eq rk-supp}.
This framework extends Equation \eqref{eq dim FGf}
to our general setting (see Lemma \ref{lem rk-supp = dim} below):
\begin{align} \label{eq rk-supp=dim}
 \mbox{rk-supp}\big(\widehat{f}\big) = \dim\F Gf, \quad
\mbox{for any}~ f\in\F^X,
 \end{align}
 where $\F Gf$ is the $\F G$-submodule of the permutation
 module $\F X$ generated by~$f$.

Section 3 focuses on any transitive $G$-set $X$, any field $\F$ and $0\ne f\in\F^X$.
We extend the sharpened uncertainty principle by replacing
$\mbox{rk-supp}(\widehat f)$ with $\dim\F Gf$.
This modification removes the field conditions in \eqref{condition semisimplicity}.
It allows the result to hold for a wider range of fields
while preserving the structure of the original principle.
Once this extension is achieved,
the sharpened or unsharpened versions of the uncertainty principle
that utilize $\mbox{rk-supp}(\widehat{f})$
(which retain the conditions in Equation \eqref{condition semisimplicity} on $\F$)
become straightforward corollaries.

Using a natural surjection $G\to X$, we lift the support
$S={\rm supp}(f)$ to a subset ${\scr S}\subseteq G$.
From this, we define the right stabilizer
$G_{\!\scr S}$ of ${\scr S}$ in $G$, which is a subgroup of $G$.
By reducing $G_{\!\scr S}$ to $X$, we obtain the block $X_{S}$ of $X$
(see Remark~\ref{rk block} for the precise definition of blocks),
such that~$S$ is a disjoint union of some translations of the block $X_S$.
Let ${\scr S}'$ denote the complement of ${\scr S}$ in $G$, and
${\scr S}'^{-\!1}=\{\al^{-\!1}\,|\,\alpha\!\in\!{\scr S}'\}$.
By $\F{\scr S}'^{-\!1}\!f$ we denote the subspace of $\F X$ spanned by the
 subset ${\scr S}'^{-\!1}\!f=\{\alpha f\,|\,\al\in{\scr S}'^{-\!1}\}$ of $\F X$.
In particular, $\F Gf$ is the subspace spanned by the subset $Gf$,
which is exactly
the submodule of $F X$ generated by $f$.
We then establish the following sharp uncertainty principle
 in Theorem~\ref{thm act s uncertainty dim} and its Corollary~\ref{cor act s uncertainty dim}
 as follows: 
\begin{equation*}
\begin{split}
\big|{\rm supp}(f)\big|\!\cdot\! \dim\F Gf
  &\ge\big|X\big|+\big(\dim\F Gf-\dim\F{\scr S}'^{-\!1}\!f\big)
  \!\cdot\! \big|{\rm supp}(f)\big|  -\big|X_{{\rm supp}(f)}\big|\\[2pt]
  &\geq\big|X\big|+
  \big|{\rm supp}(f)\big|
  -\big|X_{{\rm supp}(f)}\big|.
  \end{split}
\end{equation*}
Here, the first inequality becomes equality if and only if $f$ is
an ${\scr S}'^{-\!1}$-linear function (see Definition \ref{def G-linear function} below).
And, both the inequalities become equalities if and only if
$f$ is an ${\scr S}'^{-\!1}$-linear function and $\F f+\F{\scr S}'^{-\!1}f=\F Gf$.


As a consequence, we reobtain Equation \eqref{u p X GGI},
along with the condition for equality in this equation, as stated in Corollary
\ref{cor act uncertainty dim}.
Furthermore, as indicated previously, we derive
several corollaries involving rank support,
denoted as $\mbox{rk-supp}\big(\wh f\,\big)$,
or the regular set  $X=G$.
For instance, when   $X=G$ is considered as the left regular $G$-set,
the rank support version of Corollary~\ref{cor act s uncertainty dim}
presents the following sharpened uncertainty principle for any finite groups:
\begin{align*} 
\big|{\rm supp}(f)\big|\cdot\mbox{rk-supp}\big(\wh f\,\big)
  \ge\big|G\big|+\big|{\rm supp}(f)\big|-\big|G_{{\rm supp}(f)}\big|,
\end{align*}
and the condition for equality holding is exhibited
(see Corollary~\ref{cor act s uncertainty supp} below).

In the concluding  part of Section \ref{s u p for X F},
we utilize $\dim\C G f$ instead of $|\mbox{supp}(\wh f\,)|$
to reestablish the strong uncertainty principle given by
Equation \eqref{s u p Tao}.
We provide a  lower bound on
$\dim\C G f$ for groups $G$   of prime order (see Lemma~\ref{roots}).
This leads to a precise characterization of the
conditions under which equality holds in Equation~\eqref{s u p Tao}.

Section~\ref{conclusion} concludes this paper.

\section{Fourier transforms for finite group actions}\label{preliminaries}

In this paper,
we consider a finite group $G$   of order $|G|=n$ with
multiplication as its operation.
The identity element of $G$ is denoted by $1_G$
or simply $1$.
Let~$\F$ be any field and let $\F^\times$ be
the multiplicative group of all
units (non-zero elements) of $\F$.
Recall that $\F^G$ denotes the $\F$-vector space of all functions
from~$G$ to~$\F.$

\subsection{Function spaces and permutation modules}

We denote the group algebra by $\F G$,
which  is the $\F$-vector space
with basis $G$,
and the multiplication is defined in such a way that it
is consistent with the multiplication in $G$.
The following is a natural linear isomorphism:
\begin{align} \label{eq F^G to FG}
\F^G \longrightarrow\, \F G, ~~~
g\;\longmapsto\,\sum_{\al\in G} g(\al)\al.
\end{align}
For any  $g,h\in\F^G$,
the convolution $g*h\in\F^G$ is defined as follows:
\begin{align} \label{eq g*h}
(g*h)(\al)=\sum_{\be\in G}g(\be)h(\be^{-1}\al),
\quad\hbox{for any $\alpha$}\in G.
\end{align}
It is readily seen  that Equation \eqref{eq F^G to FG}
is an $\F$-algebra isomorphism.
In this way,  we
identify any function $g\in\F^G$ with the element
$g=\sum_{\al\in G}g(\al)\al\in\F G$.

\begin{remark} \label{rk G-set G-space} \rm
(1) A {\em $G$-set} $X$ is a set equipped with a {\em $G$-action},
which is defined by a map $G\times X\to X$ given by $(\al,x)\mapsto \al x$.
This map must satisfy the following properties:
\begin{itemize}
\item[{\rm(i)}] $(\al\be)x=\al(\be x)$ $\hbox{for any}~\,\al,\be\in G$
and  $\,x\in X$;
\item[{\rm(ii)}] $1_G x=x$ $\hbox{for any}~\, x\in X$.
\end{itemize}
A $G$-set $X$ is said to be {\em transitive}
if for any $x, y\in X$ there is an $\al\in G$
such that $\al x=y$.
Note that if we take $X=G$ and define the product $\alpha x$
for $(\alpha, x) \in G \times G$ as the multiplication in $G$,
this structure is clearly a transitive $G$-set.
This choice of $X=G$ is referred to as the {\em left regular} $G$-set.

(2) An $\F G$-module $V$, also referred to as a $G$-space over $\F$,
is defined as an $\F$-vector space with a {\em $G$-action on the space}.
More explicitly, there is a map $G\times V\to V$ given by
 $(\al ,v)\mapsto \al v$,
which satisfies the following properties:
\begin{itemize}
\item[{\rm(i)}] $(\al\be)v=\al(\be v)$  $\hbox{for any}~\al,\be\in G$
and $\,v\in V$;
\item[{\rm(ii)}] $1_Gv=v$ $\hbox{for any}~\,v\in V$;
\item[{\rm(iii)}] the map $\al:V\to V$, given by $v\mapsto \al v$,
is a linear transformation of $V$
 $\hbox{for any}~\,\al\in G$.
\end{itemize}
\end{remark}

Let $X$ be   a finite $G$-set with cardinality $|X|=m$.
Let $\F X$ be the $\F$-vector space with basis $X$.
The group $G$   acts on the vector space $\F X$ in a natural way,
as follows:
\begin{align} \label{eq alpha sum f(x) x}
 \al\Big(\sum\limits_{x\in X}f(x)x\Big)=\sum\limits_{x\in X}f(x)\al x
  \;=\sum\limits_{y\in X}f(\al^{\!-1}y) y, \quad
\end{align}
$\hbox{for any}~\, \al\in G~\hbox{and}~\sum\limits_{x\in X}f(x)x\in\F X.$
Thus, $\F X$ is an $\F G$-module,
referred to as the {\em permutation module} of the $G$-set $X$.

On the other hand, $G$ naturally acts on the function space
 $\F^X$ as well.  For $\al\in G$ and $f\in\F^X$, the action $\al f\in\F^X$
is defined as follows (compare with Equation \eqref{eq alpha sum f(x) x}):
\begin{align} \label{eq alpha f}
 (\al f)(x)=f(\al^{-1} x), \quad \hbox{for any}~\, x\in X.
\end{align}
Therefore, $\F^X$ is also an $\F G$-module.
Similar to the construction in Equation~\eqref{eq F^G to FG},
we have a natural $\F G$-module isomorphism:
\begin{align} \label{eq F^X to FX}
\F^X \longrightarrow\, \F X, ~~~
f\;\longmapsto\,\sum_{x\in X} f(x)x.
\end{align}
In this context,
we associate any function   $f\in\F^X$ with the element
$f=\sum_{x\in X}f(x)x\in\F X$.
Notably, extending the  $G$-action  given in Equation \eqref{eq alpha f},
for $g=\sum_{\al\in G}g(\al)\al\in\F G$ and $f\in\F^X$,  we have
$$
\big(\sum_{\al\in G}g(\al)\al\big)f=\sum_{\al\in G}g(\al)(\al f),
$$
which can be expressed as
$$
\Big(\big(\sum_{\al\in G}g(\al)\al\big)f\Big)(x)=
\sum_{\al\in G}g(\al) f(\al^{-1}x), \quad \hbox{for any}~\, x\in X.
$$
Viewing $g\in\F^G$ as described
in Equation \eqref{eq F^G to FG},
the right-hand side of the above equation defines
the convolution  $g*f\in\F^X$ as follows:
$$
 (g*f)(x)=\sum_{\al\in G}g(\al) f(\al^{-1}x),\quad
\hbox{for any}\,g\in\F^G, \,f\in\F^X\,~\hbox{and}~ x\in X.
$$
Given $f\in\F^X$, the support of $f$ is defined as the following subset of $X$:
\begin{align} \label{eq supp of f}
 {\rm supp}(f)=\{x\;|\;x\in X,\, f(x)\ne 0\}.
\end{align}
For $\al\in G$, since
$(\al f)(x)=f(\al^{-1}x)$  for any $ x\in X$,
we observe that
\begin{align}\label{eq supp(alpha f)}
{\rm supp}(\al f)=\al\!\cdot{\rm supp}(f), \quad \hbox{for any}\,\al\in G~
\hbox{and}~ f\in \F^X,
\end{align}
where $\al\!\cdot{\rm supp}(f)=\{\al y\,|\,y\in {\rm supp}(f)\}$.

\subsection{Fourier transform for group actions}

To describe Fourier transformations,
we assume in this section that the field~$\F$
satisfies the conditions specified in
Equation  \eqref{condition semisimplicity},
ensuring that the group algebra $\F G$ is semisimple.
Furthermore,  we extend $\F$ to a splitting field $\E$ for~$G$.

Let $\omega$ be a primitive $\exp(G)$-th root of unity,
where $\exp(G)$ is the exponent of $G$
(i.e., the least common multiple of the orders of the elements of $G$).
Define $\E=\F(\omega)$ as the extension field of $\F$
obtained by adjoining  $\omega$.
Then, $\E$ serves as a splitting field for $G$.
For more details, consult
\cite[Theorem 24]{Serre} for the case when ${\rm char}\kern1pt\F=0$,
and \cite[Proposition 43]{Serre} for the case when
 ${\rm char}\kern1pt\F\ne 0$.
Consequently,
any $\E$-irreducible character $\psi$ is absolutely irreducible.

Let ${\rm Irr}(G)$ be the set of all absolutely irreducible characters of $G$.
For any $\psi\in{\rm Irr}(G)$,
let $n_\psi$   denote the degree of $\psi$.
There exists a representation (homomorphism)
$\rho^\psi: G\to {\rm GL}_{n_{\!\psi}}\!(\E)$
that affords the character $\psi$,
where ${\rm GL}_{n_{\!\psi}}\!(\E)$ is the group of all
invertible matrices of degree  $n_\psi$ over the field $\E$.
Specifically, we express
$$\rho^\psi(\alpha) = \left( \rho^\psi_{ij}
(\alpha) \right)_{1 \leq i,j \leq n_\psi}, $$
where $ \rho^\psi_{ij} \in \mathbb{E}^G $ for $ 1 \leq i,j \leq n_\psi $. The trace of $\rho^\psi(\alpha)$ is
$$
\mathrm{Tr}\big(\rho^\psi(\alpha)\big) = \psi(\alpha).
$$
Let
\begin{align} \label{eq hat G}
\wh G=\{\,\rho^\psi_{ij}\;|\;\psi\in{\rm Irr}(G),\,i,j=1,\cdots,n_\psi\}.
\end{align}
For $\rho^\psi_{ij},\rho^\varphi_{k\ell}\in\wh G$ and $\al,\be\in G$,
we apply the corollaries of Schur's Lemma
(see \cite[\S2.2, Corollaries 2 and 3]{Serre})
to obtain the following formula:
\begin{align} \label{eq rho * rho}
(\rho^\psi_{ij}\!*\!\rho^\varphi_{k\ell})(\al)=
\sum\limits_{\be\in G}\rho^\psi_{ij}(\be^{-1})\rho^\varphi_{k\ell}(\be\al)
 =\begin{cases}
\frac{n}{n_\psi}\rho^\psi_{i\ell}(\al), & \mbox{$\psi=\varphi$ and $j=k$;}\\
0, &\mbox{otherwise.}
\end{cases}
\end{align}
It follows from \cite[Propositions 4, 5 and their corollaries]{Serre} that
$\wh G$ is a linearly independent (and indeed orthogonal)
subset of $\E^G$, such that
\begin{align}
|\wh G|=\sum_{\psi\in{\rm Irr}(G)} n_\psi^2 =n=|G|.
\end{align}
In fact,  $\wh G$  constitutes a basis for $\E^G$.

\begin{definition}\label{def hat G} \rm
We refer to  $\wh G$ in Equation \eqref{eq hat G}
as the {\em dual basis} of $G$.
\end{definition}

By the classical decomposition of $\E G$-modules
(see \cite[\S2.6 Theorem 8]{Serre}),
the space $\E^X$ decomposes into a direct sum
$$\E^X=\bigoplus_{\psi\in{\rm Irr}(G)}V^\psi,$$
where each $V^\psi$, called the $\psi$-component of $\E^X$,
is further decomposed into a direct sum given by
\begin{align} \label{decomp homog comp}
 V^\psi=W^\psi_1\oplus\cdots\oplus W^\psi_{m_\psi}, \quad \psi\in{\rm Irr}(G),
\end{align}
with each $W^\psi_i$ being an irreducible submodule affording the character $\psi$.
For $\al\in G$, the action of $\al$ on $W^\psi_i$
induces a linear transformation represented by the matrix
$$
\rho^\psi(\al)=\big(\rho^\psi_{i'j'}(\al)\big)_{1\le i',j'\le n_\psi}.
$$
Thus
$W^\psi_i$ has an $\E$-basis denoted by
$\lam^\psi_{i1},\cdots, \lam^\psi_{i n_\psi }$, satisfying the relation
\begin{align*} 
\al\lam^\psi_{ij}=\sum\limits_{k=1}^{n_\psi}\rho^\psi_{kj}(\al)\lam^\psi_{ik},
\quad \psi\in{\rm Irr}(G),\; i=1,\cdots,n_\psi, \; j=1,\cdots,m_\psi.
\end{align*}
In matrix form, we express this relation as
$$
\begin{pmatrix}
\al\lam^\psi_{11} & \cdots & \al\lam^\psi_{1 n_\psi}\\
\vdots & \ddots & \vdots \\
\al\lam^\psi_{m_{\!\psi} 1} & \cdots & \al\lam^\psi_{m_{\!\psi}\! n_{\!\psi}}\\
\end{pmatrix}
=
\begin{pmatrix}
\lam^\psi_{11} & \cdots & \lam^\psi_{1 n_\psi}\\
\vdots & \ddots & \vdots \\
\lam^\psi_{m_{\!\psi} 1} & \cdots & \lam^\psi_{m_{\!\psi}\! n_{\!\psi}}\\
\end{pmatrix}
\begin{pmatrix}
 \rho^\psi_{11}\!(\!\al\!) & \cdots & \rho^\psi_{1 n_\psi}\!(\!\al\!)\\
 \vdots & \ddots & \vdots\\
 \rho^\psi_{n_{\!\psi} 1}\!(\!\al\!) &
   \cdots & \rho^\psi_{n_{\!\psi}\! n_{\!\psi}}\!(\!\al\!)
 \end{pmatrix}.
$$
Let $\lam^\psi=\big(\lam^\psi_{ij}\big)_{1\le i\le m_{\!\psi},\,1\le j\le n_{\!\psi}}$
and $\al\lam^\psi=\big(\al\lam^\psi_{ij}\big)_{1\le i\le m_{\!\psi}, 1\le j\le n_{\!\psi}}$. We concisely express this as
\begin{align} \label{transf and matrix}
\al\lam^\psi =\lam^\psi\! \cdot\! \rho^\psi(\al),
 \quad \hbox{for any}\, \al\in G~\hbox{and}~\,\psi\in{\rm Irr}(G).
\end{align}
We are now prepared to present a specific basis for
the vector space $\E^X$.
\begin{definition}\label{def hat X} {\rm
Let
$$\wh X=\{\lam^\psi_{ij}\;|\;
 \psi\in{\rm Irr}(G),\,1\le i\le m_\psi,\,1\le j\le n_\psi\}
$$
be given as above.
Then $\wh X$  constitutes a basis of $\E^X$.
We refer to $\wh X$ as a {\em $G$-dual set} of the $G$-set $X$.
}
\end{definition}
This set $\wh X$  provides a natural basis for $\mathbb{E}^X$, leveraging the structure of the irreducible characters of $G$ and their corresponding representations.
We denote the $\E$-vector space with basis $\wh X$ by  $\E\wh X$. Then each function $h\in\E^{\wh X}$ is extended linearly to a
linear function on the vector space $\E\wh X$, and vice versa.
For $h\in\E^{\wh X}$ and $\al\in G$,
we have the following relation from Equation \eqref{transf and matrix}:
\begin{align}  \label{eq alpha h}
\al h(\lam^\psi)=h\big(\al^{-1}\lam^\psi\big)
=h\big(\lam^\psi\!\cdot\! \rho^\psi (\al^{-1})\big)
=h\big(\lam^\psi\big)\!\cdot\!\rho^\psi (\al^{-1}).
\end{align}

The vector space $\E^{\wh X}$ is equipped with an $\E G$-module structure, as the following lemma asserts.
\begin{lemma} \label{lem alpha h}
Let the symbols be the same as defined above. Then $\E^{\wh X}$ is a $G$-space,
with the
$G$-action  on the vector space $\E^{\wh X}$ defined as follows
{\rm (}which is simply the entry-wise version of Equation \eqref{eq alpha h}{\rm )}:
\begin{align*} 
\al h(\lam^\psi_{ij})
=\sum_{k=1}^{n_\psi}h(\lam^\psi_{ik})\rho^\psi_{kj}(\al^{-1}), \quad \hbox{for any}\,~\al\in G,~h\in\E^{\wh X}~ \hbox{and}\,~\lam^\psi_{ij}\in\wh X.
\end{align*}
\end{lemma}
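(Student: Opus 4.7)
My plan is to take the action defined by Eq.\eqref{eq alpha h} — namely $\al h(\lam^\psi) = h(\al^{-1}\lam^\psi)$, where $h$ is extended $\E$-linearly to the whole space $\E\wh X = \E^X$ — and (i) compute its entry-wise form to obtain the formula in the lemma, and (ii) verify that it satisfies the three axioms of a $G$-space listed in Remark~\ref{rk G-set G-space}(2). The underlying idea is simply that, since $\wh X$ is a basis of $\E^X$ (Definition~\ref{def hat X}), any $h\in\E^{\wh X}$ is naturally a linear functional on $\E^X$, and we put the standard contragredient action on such functionals.

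For step (i), I would apply Eq.\eqref{transf and matrix} with $\al$ replaced by $\al^{-1}$ to get
$$
\al^{-1}\lam^\psi_{ij} \;=\; \sum_{k=1}^{n_\psi}\rho^\psi_{kj}(\al^{-1})\,\lam^\psi_{ik},
$$
so that $\al h(\lam^\psi_{ij}) = h(\al^{-1}\lam^\psi_{ij}) = \sum_{k}h(\lam^\psi_{ik})\rho^\psi_{kj}(\al^{-1})$, which is exactly the claimed entry-wise formula. For step (ii), linearity of the map $h\mapsto \al h$ is manifest from the formula, since the right-hand side is $\E$-linear in each value $h(\lam^\psi_{ik})$; for the identity axiom, $\rho^\psi(1_G)=I_{n_\psi}$ forces $\rho^\psi_{kj}(1_G)=\delta_{kj}$, hence $1_G h(\lam^\psi_{ij}) = h(\lam^\psi_{ij})$. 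The only nontrivial axiom is associativity $(\al\be)h = \al(\be h)$, which I would obtain by expanding
$$
\al(\be h)(\lam^\psi_{ij})
= \sum_{l}(\be h)(\lam^\psi_{il})\,\rho^\psi_{lj}(\al^{-1})
= \sum_{k,l} h(\lam^\psi_{ik})\,\rho^\psi_{kl}(\be^{-1})\,\rho^\psi_{lj}(\al^{-1}),
$$
and then invoking the fact that $\rho^\psi$ is a group homomorphism: $\sum_{l}\rho^\psi_{kl}(\be^{-1})\rho^\psi_{lj}(\al^{-1}) = \rho^\psi_{kj}(\be^{-1}\al^{-1}) = \rho^\psi_{kj}((\al\be)^{-1})$, which matches $(\al\be)h(\lam^\psi_{ij}) = \sum_{k}h(\lam^\psi_{ik})\rho^\psi_{kj}((\al\be)^{-1})$.

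The argument is essentially bookkeeping, and I do not anticipate any real obstacle. The only step requiring genuine care is tracking the order of matrix multiplication: the contragredient inserts $\al^{-1}$ on the representation side, which is precisely why the summation in the lemma runs over the \emph{first} index of $\rho^\psi$ (equivalently, why $\rho^\psi(\al^{-1})$ appears on the \emph{right} of $h(\lam^\psi)$ in Eq.\eqref{eq alpha h}); getting this placement right is what makes the homomorphism identity line up cleanly in the associativity check.
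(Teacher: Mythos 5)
Your proposal is correct and follows essentially the same route as the paper: derive the entry-wise formula from the contragredient action $\al h(\lam^\psi)=h(\al^{-1}\lam^\psi)$ via Eq.\eqref{transf and matrix}, check linearity, and reduce the associativity axiom $(\al\be)h=\al(\be h)$ to the homomorphism identity $\rho^\psi(\be^{-1}\al^{-1})=\rho^\psi(\be^{-1})\rho^\psi(\al^{-1})$. The only cosmetic difference is that the paper carries out the associativity computation in compact matrix form $h(\lam^\psi)\rho^\psi((\al\be)^{-1})$ while you expand it index by index (and you also verify the identity axiom explicitly, which the paper leaves implicit).
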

\begin{proof}
It is straightforward to verify that for $\al\in G$,
 the map $h\mapsto\al h$
is a linear transformation of $\E^{\wh X}$, i.e.,
$\al(h_1+h_2)=\al h_1+\al h_2$
 $\hbox{for any}\,h_1,h_2\in\E^{\wh X}$,
and $\al(ch)=c(\al h)$ $\hbox{for any}\,h\in\E^{\wh X}
~ \hbox{and}\,c\in\E$.
For $\al,\be\in G$, $h\in\E^{\wh X}$ and
$\lam^\psi=\big(\lam^\psi_{ij}\big)_{m_\psi\times n_\psi}$,
we have
\begin{align*}
(\al\be)h(\lam^\psi) =h\big(\lam^\psi\big)\rho^\psi\big((\al\be)^{-1}\big)
=\big(h(\lam^\psi)\rho^\psi((\be)^{-1})\big)\rho^\psi((\al)^{-1})
=\al\big(\be h(\lam^\psi)\big),
\end{align*}
giving $(\al\be)h=\al(\be h)$. Thus, by Remark \ref{rk G-set G-space}(2),
 $\E^{\wh X}$ is indeed a $G$-space.
\end{proof}

Based on the $\E G$-module structure on
 $\mathbb{E}^{\widehat{X}}$,
 we now define the Fourier transformation for the $G$-set $X$.

\begin{definition}\label{def Fourier transf} \rm
The map  $\E^X \to\E^{\wh X}$, defined by $f\mapsto \wh f$, is given by
$$
 \wh f(\lam^\psi_{ij})=\sum\limits_{x\in X} f(x)\lam^\psi_{ij}(x),
\quad \psi\in{\rm Irr}(G),~ i=1,\cdots,m_\psi,~ j=1,\cdots,n_\psi.
$$
This map is referred to as the
{\em Fourier transformation} for the $G$-set $X$, and
$\wh f$ is called the {\em Fourier transform} of $f$.
\end{definition}

Analogous to the classical Fourier transform on finite groups, we have the following result.
\begin{lemma} \label{lem E^X cong E^hat X}
The Fourier transform  $\E^X\to\E^{\wh X}$, $f\mapsto\wh f$,
is a $G$-space {\rm (}$\E G$-module{\rm)} isomorphism.
\end{lemma}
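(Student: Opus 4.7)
The plan is to verify three properties of the map $f\mapsto\wh f$: $\E$-linearity, bijectivity, and compatibility with the $G$-actions on $\E^X$ (Eq.\eqref{eq alpha f}) and on $\E^{\wh X}$ (Lemma \ref{lem alpha h}).

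First, $\E$-linearity will be immediate from Definition \ref{def Fourier transf}, since the expression $\wh f(\lam^\psi_{ij})=\sum_{x\in X}f(x)\lam^\psi_{ij}(x)$ is $\E$-linear in $f$.

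Next, for bijectivity I would count dimensions: $\wh X$ is a basis of $\E^X$ (Definition \ref{def hat X}), so $|\wh X|=|X|$ and $\dim\E^{\wh X}=\dim\E^X$. Writing $f=\sum_x f(x)\delta_x$ in the indicator basis $\{\delta_x\}_{x\in X}$ of $\E^X$, the matrix of $f\mapsto\wh f$ has $(\lam^\psi_{ij},x)$-entry equal to $\lam^\psi_{ij}(x)$; this is precisely the transition matrix expressing the basis $\wh X$ in the basis $\{\delta_x\}$ of $\E^X$, so it is invertible and the map is an isomorphism of vector spaces.

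The substantive step is the $G$-equivariance $\widehat{\al f}=\al\wh f$. I would compute $\widehat{\al f}(\lam^\psi_{ij})$ from the definition, use $(\al f)(x)=f(\al^{-1}x)$ and the change of variable $y=\al^{-1}x$ to transfer the $\al$ onto the basis element, obtaining $\sum_y f(y)\lam^\psi_{ij}(\al y)=\sum_y f(y)(\al^{-1}\lam^\psi_{ij})(y)$. Then Eq.\eqref{transf and matrix} applied at $\al^{-1}$ gives $\al^{-1}\lam^\psi=\lam^\psi\cdot\rho^\psi(\al^{-1})$, i.e.\ $(\al^{-1}\lam^\psi_{ij})(y)=\sum_k\lam^\psi_{ik}(y)\rho^\psi_{kj}(\al^{-1})$. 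Substituting and exchanging the order of summation, I will recover $\sum_k\wh f(\lam^\psi_{ik})\rho^\psi_{kj}(\al^{-1})$, which matches the formula for $(\al\wh f)(\lam^\psi_{ij})$ supplied by Lemma \ref{lem alpha h}.

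The only real obstacle is careful index bookkeeping of $\al$ versus $\al^{-1}$. The convention $(\al f)(x)=f(\al^{-1}x)$ forces Eq.\eqref{transf and matrix} to be applied at $\al^{-1}$, and I must place the matrix factor $\rho^\psi(\al^{-1})$ on the correct side so that it agrees with the right-action formula in Lemma \ref{lem alpha h}. Beyond this, no difficulty is expected: bijectivity is essentially free from the basis property of $\wh X$, and the equivariance reduces to a one-line calculation once the conventions are aligned.
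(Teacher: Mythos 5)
Your proposal is correct and follows essentially the same route as the paper: linearity from the defining formula, invertibility of the $m\times m$ matrix $\big(\lam^{\psi}_{ij}(x)\big)$ because its rows are the basis $\wh X$ expressed in the indicator basis, and equivariance via the change of variable $y=\al^{-1}x$ followed by Eq.\eqref{transf and matrix} at $\al^{-1}$ and Eq.\eqref{eq alpha h}. The only cosmetic difference is that the paper carries out the equivariance computation in matrix form for the whole block $\lam^\psi$ while you do it entry-wise; the content is identical.
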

\begin{proof}
We enumerate ${\rm Irr}(G)$, $X$,
and the index $\wh X$ in lexicographical order:
\begin{align} \label{eq list Irr}
\begin{array}{rl}
{\rm Irr}(G)=\{\psi_1, \cdots,\psi_r\},\quad &X=\{x_1,\cdots,x_m\},\\[5pt]
\wh X=\{\lam^1_{11}, \cdots, \lam^{k}_{ij},\cdots,\lam^{r}_{m_r n_r}\},
 &\lam^k_{ij}=\lam^{\psi_k}_{ij},~ m_k=m_{\psi_k},~  n_k=n_{\psi_k}.
\end{array}
\end{align}
We then have
\begin{align} \label{eq matrix version of wh f}
\begin{pmatrix}\wh f(\lam^1_{11})\\ \vdots\\ \wh f(\lam^r_{m_r n_r})\end{pmatrix}
=\begin{pmatrix} \lam^1_{11}(x_1) & \cdots & \lam^1_{11}(x_m) \\
 \vdots & \ddots & \vdots \\
\lam^r_{m_r n_r}(x_1) & \cdots & \lam^r_{m_r n_r}(x_m)
\end{pmatrix}
\begin{pmatrix} f(x_1)\\ \vdots \\ f(x_m) \end{pmatrix}.
\end{align}
Thus, the mapping $f\mapsto\wh f$ is a linear map.
Since the set
$\{\lam^1_{11}, \cdots, \lam^{k}_{ij},\cdots,\lam^{r}_{m_r n_r}\}$
forms a basis of $\E^{X}$ (hence $m_1n_1+\cdots+m_rn_r=m$),
the rows of the $m\times m$ matrix $\big(\lam^k_{ij}(x_t)\big)_{m\times m}$
in Equation \eqref{eq matrix version of wh f} are linearly independent,
making the matrix invertible.
We have thus shown that  $f\mapsto\wh f$ is a linear isomorphism.
For $\al\in G$, $f\in\E^X$ and
$\lam^\psi=\big(\lam^\psi_{ij}\big)_{m_\psi\times n_\psi}$
as in Equation \eqref{transf and matrix}, we have
\begin{align*}
\wh{\al f}(\lam^\psi)
&=\sum_{x\in X}(\al f)(x)\,\lam^\psi(x)
=\sum_{x\in X}f(\al^{-1} x)\lam^\psi(x)
=\sum_{y\in X}f(y)\lam^\psi(\al y) \\
&=\sum_{y\in X}f(y)\!\cdot\!(\al^{-1}\lam^\psi)(y)
=\sum_{y\in X}f(y)\!\cdot\!\lam^\psi(y)\!\cdot\!\rho^\psi(\al^{-1})\\
&=\sum_{y\in X}\big(f(y)\lam^\psi(y)\big)\rho^\psi(\al^{-1})
  = \wh f(\lam^\psi)\rho^\psi(\al^{-1})
  =(\al \wh f\,)(\lam^\psi),
\end{align*}
where the first equality holds by Equation \eqref{transf and matrix}
and
the last equality  is derived from Equation \eqref{eq alpha h}.
We conclude that
 $\wh{\al f}=\al\wh f$ for any  $\al\in G$ and $f\in\E^X$.
The Fourier transform   $f\mapsto\wh f$
 is therefore a $G$-space isomorphism.
\end{proof}

\begin{remark} \label{rk if X= then hat X} \rm
(1) If $\F=\C$ (hence $\E=\C$) is the complex number field,
then there exists a $G$-dual set $\wh X$
with improved properties (in particular, orthogonality) such that the Fourier inversion can be defined in a classical way, cf.~\cite{FX22}.

(2) If $X=G$ is the left regular $G$-set,
then we take $\wh X=\wh G$ as usual;
the dual basis $\wh G$ exhibits orthogonality,
see Equation \eqref{eq rho * rho},
allowing for improved results, cf. \cite{FX18}.

(3) Assume that $G$ is abelian.
If $X=G$ is the regular set,
then $\wh X=\wh G$ is the dual group of $G$
(where the choice of $\wh X$ is unique up to rescaling),
and all related results are classical.
If $X$ is a transitive $G$-set, it  reduces to the
regular set of a quotient group of $G$.
Finally, if $X$ is not transitive, $X$ is partitioned into orbits,
and it is reduced to each orbit which is transitive, cf. \cite{FX17}.
\end{remark}

\subsection{The rank support of the Fourier transform $\wh f$}
For $f\in\F^X$, the support ${\rm supp}(f)=\{x\,|\,x\in X,\, f(x)\ne 0\}$
was  defined in Equation \eqref{eq supp of f}.
Since $\F^X\subseteq \E^X$, we consider the Fourier transform $\wh f\in\E^{\wh X}$.
For technical reasons, as discussed in \cite[\S3.2.2]{WW21},
we define
$$
 \big|{\rm supp}(\wh f\,)\big|
 =\sum_{\psi\in{\rm Irr}(G)} n_\psi\!\cdot\big|{\rm supp}(\wh f(\lam^\psi))\big|,
$$
where $\wh f(\lam^\psi)=\big(\wh f(\lam^\psi_{ij})\big)_{m_\psi\times n_\psi}$,
as detailed in  Equation \eqref{transf and matrix}.
It is important to note that the $G$-dual set $\wh X$ is not unique and
it depends on
\begin{itemize}
\item[(C1).] the choices of the dual basis $\wh G$ of $G$;
\item[(C2).] the choices of the decompositions as specified in Equation \eqref{decomp homog comp}.
\end{itemize}

\noindent
Following \cite[Definition 3.6]{WW21}, we define
\begin{align}
\big|\mbox{min-supp}(\wh f)\big|=\min_{\wh X} \big|{\rm supp}(\wh f)\big|,
\end{align}
where the minimum is taken over the possible choices of the
 $G$-dual set $\wh X$.
On the other hand, following the work of
 \cite{Me92} (see Equation \eqref{eq rk-supp}),
we define the rank support as follows:
\begin{align} \label{def rk-supp}
\mbox{rk-supp}\big(\wh f\,\big)
=\sum_{\psi\in{\rm Irr}(G)}n_\psi\!\cdot\!{\rm rank}\big(\wh f(\lam^\psi)\big).
\end{align}
Note that $\mbox{rk-supp}\big(\wh f\,\big)$ is an integer, not a set.

Each change associated with the first choice (C1)
implies the existence of a matrix
$P\in{\rm GL}_{n_\psi}\!(\E)$ such that the matrix
$\wh f(\lam^\psi)$ is modified to $\wh f(\lam^\psi)\!\cdot\!P$.
A change stemming from the second choice (C2)
implies there exists a matrix $Q\in {\rm GL}_{m_\psi}\!(\E)$ such that
the matrix $\wh f(\lam^\psi)\!\cdot\!P$
is transformed to  $Q\!\cdot\!\wh f(\lam^\psi)\!\cdot\!P$.
There are matrices $P,Q$ such that
$$
Q\!\cdot\!\wh f(\lam^\psi)\!\cdot\!P
=\begin{pmatrix}
 1 &  &  & \\
    & 1 &   &\\
         &&\ddots  &
 \end{pmatrix}_{m_\psi\times n_\psi},
$$
which is a diagonal (possibly not square) matrix with $1$s and $0$s along the diagonal.
The number of $1$s on the diagonal equals ${\rm rank}\big(\wh f(\lam^\psi)\big)$.

We summarize the preceding discussion in the following lemma.
\begin{lemma}
Let $\wh X$ be defined as in Definition \ref{def hat X}.
For any $f\in\F^X$ we have
$$
\mbox{\rm rk-supp}\big(\wh f\,\big)=\big|\mbox{\rm min-supp}\big(\wh f\,\big)\big|.
\eqno\qed
$$
\end{lemma}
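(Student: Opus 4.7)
The plan is to prove the two opposite inequalities $\mbox{rk-supp}(\wh f)\le\big|\mbox{min-supp}(\wh f)\big|$ and $\big|\mbox{min-supp}(\wh f)\big|\le\mbox{rk-supp}(\wh f)$, exploiting how the matrices $\wh f(\lam^\psi)$ depend on the two independent choices (C1) and (C2) highlighted just before the lemma. The first inequality will follow from a rank-versus-support comparison that holds for \emph{every} choice of $\wh X$, and the second will follow by exhibiting a single choice that achieves the rank bound on each isotypic block.

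For the first inequality I would note that any matrix over $\E$ has at least as many nonzero entries as its rank, because its rank is bounded by the number of nonzero columns and each nonzero column contributes at least one nonzero entry. Applied to each block $\wh f(\lam^\psi)$ this gives $\big|{\rm supp}(\wh f(\lam^\psi))\big|\ge{\rm rank}(\wh f(\lam^\psi))$; multiplying by $n_\psi$ and summing over $\psi\in{\rm Irr}(G)$ yields $\big|{\rm supp}(\wh f)\big|\ge\mbox{rk-supp}(\wh f)$ for every choice of $\wh X$, so the inequality survives when the left-hand side is minimised.

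For the reverse direction I would build the required $\wh X$ block by block. For each $\psi$ I would choose invertible matrices $P\in{\rm GL}_{n_\psi}\!(\E)$ and $Q\in{\rm GL}_{m_\psi}\!(\E)$ that put $\wh f(\lam^\psi)$ into rectangular diagonal form with exactly ${\rm rank}(\wh f(\lam^\psi))$ nonzero entries (the argument recalled in the paragraph preceding the lemma), then use (C1) to realise the right multiplication by $P$ and (C2) to realise the left multiplication by $Q$. Summing with weights $n_\psi$ would give $\big|{\rm supp}(\wh f)\big|=\mbox{rk-supp}(\wh f)$ for that specific $\wh X$, forcing $\big|\mbox{min-supp}(\wh f)\big|\le\mbox{rk-supp}(\wh f)$.

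The main obstacle will be checking that (C1) and (C2) really provide the full groups ${\rm GL}_{n_\psi}\!(\E)$ and ${\rm GL}_{m_\psi}\!(\E)$, and that the two freedoms can be exercised independently for each $\psi$. For (C1), Schur's lemma over the splitting field $\E$ shows that any two matrix realisations of an irreducible $\psi$ differ by conjugation by an arbitrary $P\in{\rm GL}_{n_\psi}\!(\E)$, and replacing $\rho^\psi$ by $P^{-1}\rho^\psi P$ right-multiplies $\wh f(\lam^\psi)$ by $P$ via Eq.\eqref{transf and matrix}. For (C2), the $\psi$-isotypic component $V^\psi\subseteq\E^X$ is canonical, and by the Wedderburn description of its $G$-endomorphism ring as $M_{m_\psi}\!(\E)$, any two decompositions of $V^\psi$ into $m_\psi$ irreducible submodules are related by the action of an arbitrary $Q\in{\rm GL}_{m_\psi}\!(\E)$ on the rows of $\lam^\psi$. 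Since $P$ acts on the column index and $Q$ on the row index of $\wh f(\lam^\psi)$, the two actions commute and any pair $(P,Q)$ is realisable simultaneously, which closes the argument.
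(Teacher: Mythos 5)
Your proposal is correct and follows essentially the same route as the paper, which proves the lemma by exactly the discussion you reconstruct: the changes (C1) and (C2) act by right multiplication by $P\in{\rm GL}_{n_\psi}\!(\E)$ and left multiplication by $Q\in{\rm GL}_{m_\psi}\!(\E)$, and a suitable pair puts each block $\wh f(\lam^\psi)$ into rectangular diagonal form with ${\rm rank}\big(\wh f(\lam^\psi)\big)$ nonzero entries. Your write-up is in fact slightly more complete than the paper's, since you make explicit both the lower bound $\big|{\rm supp}(\wh f(\lam^\psi))\big|\ge{\rm rank}\big(\wh f(\lam^\psi)\big)$ valid for every choice and the verification that (C1) and (C2) realise arbitrary commuting pairs $(P,Q)$, points the paper leaves implicit.
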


This lemma indicates that the rank support of the Fourier transform $\widehat{f}$ coincides with the minimum support over the choices of the $G$-dual set   $\widehat{X}$.
Note that
if $X=G$ is the left regular set,
then we take $\wh X=\wh G$ as usual
(cf. Remark \ref{rk if X= then hat X}(2)).
 In this scenario, there is only one choice (C1) available.
 Consequently, the matrix $\wh f(\rho^\psi)$ can only be altered to
 $\wh f(\rho^\psi)\!\cdot\!P$. Thus, we derive only the inequality:
 $\mbox{\rm rk-supp}\big(\wh f\,\big)\le\big|\mbox{\rm min-supp}\big(\wh f\,\big)\big|$,
 as discussed in \cite[Lemma 3.8]{WW21}.

Our goal is to establish a sharp  uncertainty principle which trades off
between $|{\rm supp}(f)|$ and $\mbox{\rm rk-supp}\big(\wh f\,\big)$.
Before proceeding, we explore the relationship between
$\mbox{\rm rk-supp}\big(\wh f\,\big)$ and the $\F$-dimension
$\dim_{\F}\F Gf$ of the $\F G$-submodule $\F Gf$
of the $\F G$-module $\F X$ generated by $f$.

\begin{lemma} \label{lem rk-supp = dim}
For any $f\in\F^X$, let $\dim_\E\E G\wh f$ denote the
$\E$-dimension of the $\E G$-submodule $\E G\wh f$
of $\E^{\wh X}$ generated by $\wh f$,
and let $\dim_{\F}\F Gf$ denote the $\F$-dimension
of the $\F G$-submodule $\F Gf$ of $\F^{X}$ generated by $f$.
Then we have
$$\mbox{\rm rk-supp}\big(\wh f\,\big)
   =\dim_\E\E G\wh f =\dim_{\F}\F Gf.
$$
\end{lemma}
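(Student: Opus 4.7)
The plan is to chain together three equalities: $\dim_\F \F G f = \dim_\E \E G f$, then $\dim_\E \E G f = \dim_\E \E G \wh f$, and finally $\dim_\E \E G \wh f = \mbox{rk-supp}(\wh f)$. The first equality is extension of scalars: the $\F G$-submodule $\F Gf \subseteq \F^X$ satisfies $\E Gf = \F Gf \otimes_\F \E$ inside $\E^X = \F^X\otimes_\F\E$, which preserves dimension. The second is immediate from Lemma~\ref{lem E^X cong E^hat X}, since a $\E G$-isomorphism carries $\E G f$ isomorphically onto $\E G \wh f$.

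The real content is the third equality. First I would decompose $\E^{\wh X}$ according to the index $\psi\in{\rm Irr}(G)$: for each $\psi$ let $U^\psi\subseteq \E^{\wh X}$ be the subspace of functions supported on $\{\lam^\psi_{ij}\}_{i,j}$. From the explicit formula in Lemma~\ref{lem alpha h}, the action of any $\al\in G$ preserves $U^\psi$, so $\E^{\wh X}=\bigoplus_\psi U^\psi$ as $\E G$-modules. Writing $\wh f=\sum_\psi \wh f_\psi$ with $\wh f_\psi\in U^\psi$, the central idempotents of the semisimple algebra $\E G$ project $\wh f$ onto each $\wh f_\psi$, so $\E G\wh f=\bigoplus_\psi \E G\wh f_\psi$ and hence $\dim_\E \E G\wh f = \sum_\psi \dim_\E \E G\wh f_\psi$.

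Next, for a fixed $\psi$, I identify $U^\psi$ with the matrix space $M_{m_\psi\times n_\psi}(\E)$ via $h\leftrightarrow h(\lam^\psi)=\bigl(h(\lam^\psi_{ij})\bigr)$. The formula in Lemma~\ref{lem alpha h} reads $(\al h)(\lam^\psi)=h(\lam^\psi)\cdot\rho^\psi(\al^{-1})$, so the $\E$-span of $\{(\al\wh f_\psi)(\lam^\psi):\al\in G\}$ equals $\{\wh f(\lam^\psi)\cdot M: M\in\rho^\psi(\E G)\}$. Because $\rho^\psi$ is absolutely irreducible, Burnside's theorem (equivalently, the Jacobson density theorem) gives $\rho^\psi(\E G)=M_{n_\psi}(\E)$. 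Thus $\E G\wh f_\psi$ corresponds to the space of all matrices $\wh f(\lam^\psi)\cdot M$ with $M\in M_{n_\psi}(\E)$. A standard observation in linear algebra shows this space consists exactly of those $m_\psi\times n_\psi$ matrices whose columns lie in the column space of $\wh f(\lam^\psi)$, so its $\E$-dimension is $n_\psi\cdot{\rm rank}\bigl(\wh f(\lam^\psi)\bigr)$.

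Summing over $\psi$ yields
\[
\dim_\E \E G\wh f = \sum_{\psi\in{\rm Irr}(G)} n_\psi\cdot{\rm rank}\bigl(\wh f(\lam^\psi)\bigr)
= \mbox{rk-supp}(\wh f),
\]
by Definition~\eqref{def rk-supp}, closing the chain. The main obstacle I anticipate is the cleanly justified appeal to absolute irreducibility to conclude $\rho^\psi(\E G)=M_{n_\psi}(\E)$; everything else is either a direct invocation of a previous lemma or a bookkeeping check that the decomposition by $\psi$ is respected by both the $G$-action and the formation of the cyclic submodule.
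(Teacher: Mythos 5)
Your proposal is correct and follows essentially the same route as the paper: both arguments identify $\E^{\wh X}$ blockwise with the matrix spaces ${\rm M}_{m_\psi\times n_\psi}(\E)$ via $h\mapsto h(\lam^\psi)$, use the surjectivity of $\E G\to\prod_\psi {\rm M}_{n_\psi}(\E)$ (the paper cites Serre's Proposition~10, you cite Burnside -- the same fact), and conclude $\dim\bigl(\wh f(\lam^\psi){\rm M}_{n_\psi}(\E)\bigr)=n_\psi\cdot{\rm rank}\bigl(\wh f(\lam^\psi)\bigr)$, finishing with Lemma~\ref{lem E^X cong E^hat X} and scalar extension exactly as the paper does. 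The only cosmetic difference is that you split off the $\psi$-components with central idempotents while the paper packages all blocks into one $\E G$-module isomorphism $\xi$; this changes nothing of substance.
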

\begin{proof}
Let ${\rm M}_{m\times n}(\E)$ \big(and ${\rm M}_{n}(\E)$, respectively\big)
be the $\E$-space of all
$m\times n$ ($n\times n$, respectively) matrices over $\E$.
We maintain the notation as in Equation \eqref{eq list Irr}.
The representations
$\rho^k=\rho^{\psi_k}$, $k=1,\cdots,r$, induce
an $\E$-algebra isomorphism (see \cite[Proposition 10]{Serre}):
\begin{align} \label{EG to M}
\begin{array}{cccc}
 \rho: & \E G & \mathop{\longrightarrow}\limits^{\cong} &
 {\rm M}_{n_1}(\E)\; \times~\cdots~\times\; {\rm M}_{n_r}(\E),
\\
 & \sum\limits_{\al\in G} g(\al)\al & \longmapsto &
 \Big( \sum\limits_{\al\in G} g(\al)\rho^1(\al), ~
  \cdots, ~ \sum\limits_{\al\in G} g(\al)\rho^r(\al) \Big).
\end{array}
\end{align}
Since $\wh X$ is a basis of~$\E^{X}$, by
denoting $\lam^k=\lam^{\psi_k}$ as in
Equation \eqref{transf and matrix},
we have the following linear isomorphism:
\begin{align} \label{E^hat X to M}
\begin{array}{cccc}
 \xi: & \E^{\wh X} & \mathop{\longrightarrow}\limits^{\cong} &
 {\rm M}_{m_1\times n_1}(\E) \times\cdots\times {\rm M}_{m_r\times n_r}(\E),
\\
 & h & \longmapsto & \big(\; h(\lam^1)\;, ~~~ \cdots, ~~~ h(\lam^r)\; \big).
\end{array}
\end{align}
For $\al\in G$ and $A=(A^1,\cdots,A^r)$
 with $A^k\in {\rm M}_{m_k\times n_k}(\E)$,
 define $\al\!\circ\! A$ as follows:
\begin{align} \label{eq alpha circ A}
 \al\!\circ\! A=\big(A^1\rho^1(\al^{-1}),\;\cdots,\; A^r\rho^r(\al^{-1})\big).
\end{align}
It is straightforward to verify that the right-hand
side of Equation \eqref{E^hat X to M}
forms a $G$-space.
Specifically, we confirm condition (i)
for $G$-spaces in Remark \ref{rk G-set G-space}(2) as follows:
for $\al,\be\in G$,
\begin{align*}
(\al\be)\!\circ\!A
&=\big(\cdots,A^k\rho^k\big((\al\be)^{-1}\big),\cdots\big)
=\big(\cdots, A^k\rho^k\big(\be^{-1}\al^{-1}\big),\cdots\big) \\
&=\big(\cdots, A^k\rho^k(\be^{-1})\rho^k(\al^{-1}),\cdots\big)
=\big(\cdots, \big(A^k\rho^k(\be^{-1})\big)\rho^k(\al^{-1}) ,\cdots\big)\\
&=\al\!\circ\big(\cdots,A^k\rho^k(\be^{-1}),\cdots\big)
=\al\!\circ\!(\be\!\circ\! A).
\end{align*}
Using a similar entry-by-entry computation for
$\al\in G$ and $h\in \E^{\wh X}$,
by Lemma~\ref{lem alpha h} (i.e., Equation \eqref{eq alpha h}),
we have
\begin{align*}
 \xi\big(\al h\big)=\big(\cdots, \al h(\lam^k), \cdots\big)
=\big(\cdots, h(\lam^k)\rho^k(\al^{-1}), \cdots\big)
=\al\!\circ\!\xi(h).
\end{align*}
Thus, $\xi$ in Equation \eqref{E^hat X to M} is an $\E G$-module isomorphism.
By utilizing the isomorphisms in Equations
\eqref{EG to M} and \eqref{E^hat X to M},
alongside the definition in Equation~\eqref{eq alpha circ A},
we write
$$
\xi(\E G\wh f)=\E G\circ \xi(\wh f)=
\wh f(\lam^1){\rm M}_{n_1}(\E)\times\cdots\times
\wh f(\lam^r){\rm M}_{n_r}(\E).
$$
Thus, we find that
$$
\dim_\E \E G\wh f
=\sum_{k=1}^r \dim_\E\big(\wh f(\lam^k){\rm M}_{n_k}(\E)\big)
=\sum_{k=1}^r n_k\!\cdot\!{\rm rank}(\wh f(\lam^k)).
$$
By the definition from Equation \eqref{def rk-supp}, we have
$\dim_\E \E G\wh f =\mbox{rk-supp}(\wh f\,)$.
It follows from Lemma~\ref{lem E^X cong E^hat X} that
$\dim_\E \E G\wh f =\dim_\E \E Gf $.
Finally, since the $\E$-space $\E G f$ is obtained
from the $\F$-space $\F G f$
by extending the coefficient field, we conclude that
$\dim_\E\E Gf =\dim_\F \F G f$.
\end{proof}

\begin{remark} \label{rk G abelian} \rm
(1) From Equation \eqref{eq F^X to FX},
we have the $\F G$-module isomorphism $\F^X\cong \F X$.
Lemma \ref{lem rk-supp = dim} can be rewritten as (i.e. Equation \eqref{eq rk-supp=dim})
$$
\mbox{rk-supp}\big(\widehat{f}\big) = \dim_\F\F G f, \quad
\mbox{for any}~ f\in\F^X.
$$
Thus we consider the uncertainty principle
on the transitive $G$-set $X$ for any field~$\F$
using $\dim_\F\F G f$ instead of $\mbox{rk-supp}(\wh f\,)$
(hence the conditions in Equation \eqref{condition semisimplicity} on $\F$
are no longer necessary).
When such an uncertainty principle is established,
an uncertainty principle on $\mbox{rk-supp}(\wh f)$
(with the conditions in Equation \eqref{condition semisimplicity} on $\F$),
or on the regular set $X=G$, follows directly.

(2)
If $G$ is abelian, we consider only the regular $G$-set $X\!=\!G$
(see Remark~\ref{rk if X= then hat X}(3)).
 In this case, the analysis simplifies as follows:
\begin{itemize}
\item
$n_\psi\!=\!1$ and $\rho^\psi\!=\!\psi$
for each $\psi\in{\rm Irr}(G)$,
making $\wh G\!=\!{\rm Irr}(G)$  the dual group;

\item
We have $\mbox{rk-supp}(\wh f\,)=|{\rm supp}(\wh f\,)|$
since $\wh f(\lam^\psi)\!=\!\wh f(\psi)$ is a $1\times 1$ matrix
in~Equation \eqref{def rk-supp};

\item
 Equations \eqref{E^hat X to M} and \eqref{eq alpha circ A}
reduce to
\begin{align*}
\begin{array}{cccc}
 \xi:~\E^{\wh G}\ \mathop{\longrightarrow}\limits^{\cong}\;
  \E \times\cdots\times \E,
~~~
  h \,\longmapsto\, \big( h(\psi_1),\; \cdots,\; h(\psi_n) \big);
\end{array}
\end{align*}
and, for $\al\in G$ and $(a_1,\cdots,a_n)\in\E \times\cdots\times \E$,
\begin{align*}
 \al\!\circ\!(a_1,\cdots,a_n)=
\big(a_1\psi_1(\al^{-1}),\cdots, a_n\psi_n(\al^{-1})\big);
\end{align*}
\item
Hence,~
$\xi\big(\E G\wh f \,\big)=\E G\circ \xi(\wh f\,)=
\wh f(\psi_1)\E \times\cdots\times
\wh f(\psi_n)\E $;
\item
By the above equality, Lemma \ref{lem rk-supp = dim} becomes evident.
\end{itemize}

\end{remark}

\section{Sharp uncertainty principle} \label{s u p for X F}

In this section we always assume that
$G$ is a finite group of order $|G|=n$
with multiplication as the operation, $\F$ is any field,
and $X$ is a transitive $G$-set (cf. Remark \ref{rk G-set G-space}(1))
with cardinality $|X|=m>0$.

\subsection{Preparations}

For any subsets ${\scr A}\subseteq G$ and $Y\subseteq X$,
we define several useful notations as follows:
\begin{itemize}
\item The inverse set: ${\scr A}^{-1}=\{\al^{-1}\,|\,\al\in{\scr A}\}$;

\item The action of $\mathscr{A}$  on $Y$: ${\scr A}Y=\{\al y\,|\,\al\in{\scr A},y\in Y\}$;

\item If $ Y = \{ y \}$, we simply write $\mathscr{A} Y = \mathscr{A} y$. If $\mathscr{A} = \{ \alpha \}$, we write $\mathscr{A} Y = \alpha Y$.
\end{itemize}

Additionally, for any subsets ${\scr A},{\scr B}\subseteq G$ and $Y\subseteq X$,
the following hold:
$$
({\scr A}{\scr B})Y={\scr A}({\scr B}Y), \quad
({\scr A}\cup {\scr B})Y=({\scr A}Y)\cup ({\scr B}Y).
$$
It is important to note that the distribution for intersection
``$({\scr A}\cap{\scr B})Y={\scr A}Y\cap{\scr B}Y$''
does not generally hold,
see the note following  Corollary \ref{cor x_0 closed} below.
We denote the difference set by
${\scr A}-{\scr B}=\{\al\,|\,\al\in{\scr A} \mbox{ but } \al\notin{\scr B}\}$.
In particular, $X-Y$  represents  the complement of $Y$ in $X$.

Given any point $x_0\in X$, we note that since the $G$-set $X$ is transitive,
 there exists a natural surjective map:
\begin{align}\label{def zeta_x_0 on G}
\zeta_{x_0}\!:\; G\to X, \quad
 \zeta_{x_0}(\al)=\al x_0,\quad\hbox{for any}\, \al\in G.
\end{align}
For any $Y\subseteq X$, we define the inverse image of $Y$ under $\zeta_{x_0}$ by
$$
\zeta_{x_0}^{-1}(Y)=\{\al\,|\,\al\in G,\,\zeta_{x_0}(\al)\in Y\}.
$$
Let us recall the stabilizer of $x_0$ in $G$:
\begin{align} \label{def stabilizer of x_0}
G_{\!x_0}=\{\al\,|\, \al\in G,\,\al x_0=x_0\},
\end{align}
which forms a subgroup of $G$. This holds because for any
$\al,\be\in G_{\!x_0}$, we have
$(\al\be)x_0=\al(\be x_0)=\al x_0=x_0$,
implying $\al\be\in G_{\!x_0}$.
The subgroup $G_{x_0}$ is referred to as the
{\em stabilizer} of $x_0$ in $G$.
As usual,  the set of left cosets of $G_{\!x_0}$ in $G$
is denoted by
$G/G_{\!x_0}=\{\ga G_{\!x_0}\,|\,\ga\in G\}$.
For any $x\in X$, it follows that
 $\zeta_{x_0}^{-1}(x)=\ga_x G_{\!x_0}$
is a left coset of $G_{\!x_0}$,
where $\zeta_{x_0}(\ga_x)=\ga_x x_0=x$.
Consequently,  $G$ acts by left multiplication on the set $G/G_{\!x_0}$,
giving rise to the bijection:
\begin{align} \label{eq equivalence G-sets}
 G/G_{\!x_0} \longrightarrow X,~~~ \ga G_{\!x_0}\;\longmapsto\;\ga x_0,
\end{align}
which is a $G$-equivalence of $G$-sets (cf. \cite[\S3  Proposition 4]{AB}).

\begin{remark} \label{rk k m} {\rm
From the equivalence given in Equation
\eqref{eq equivalence G-sets}, we assume that
\begin{align*} 
 |G_{\!x_0}|=k, \quad |X|=m; \quad \mbox{hence } ~  n=|G|=km.
\end{align*}
By this equivalence, for any subset ${\scr A}\subseteq G$,
we have
$$
{\scr A}\subseteq\zeta_{x_0}^{-1}(\zeta_{x_0}({\scr A}))
=\zeta_{x_0}^{-1}({\scr A}x_0),
$$
which leads to the result:
$$
|{\scr A}|\le k\!\cdot\!|{\scr A}x_0|.
$$
}
\end{remark}
The following lemma characterizes the subsets ${\scr A}\subseteq G$
that achieve the above equation with equality.
\begin{lemma} \label{lem x_0 closed}
Let notation be as above. For any subset ${\scr A}\subseteq G$
the following three statements are equivalent:
\begin{itemize}
\item[{\rm(1)}] $\zeta_{x_0}^{-1}(\zeta_{x_0}({\scr A}))
=\zeta_{x_0}^{-1}({\scr A}x_0) ={\scr A}$.

\item[{\rm(2)}]
 ${\scr A}$ is a disjoint union of some left cosets of $G_{\!x_0}$,
i.e., ${\scr A}G_{\!x_0}={\scr A}$.

\item[{\rm(3)}]
 $|{\scr A}|=k\!\cdot\!|{\scr A}x_0|$.
\end{itemize}
\end{lemma}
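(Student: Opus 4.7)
The plan is to prove the cyclic chain of implications (1) $\Rightarrow$ (2) $\Rightarrow$ (3) $\Rightarrow$ (1), exploiting throughout that the fibers of $\zeta_{x_0}$ are precisely the left cosets of $G_{x_0}$, each of size $k$; this follows from Eq.\eqref{eq equivalence G-sets} together with the remark right after Eq.\eqref{def zeta_x_0 on G} that $\zeta_{x_0}^{-1}(x) = \gamma_x G_{x_0}$ for any $x \in X$.

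For (1) $\Rightarrow$ (2), I would observe that $1 \in G_{x_0}$ gives $\mathscr{A} \subseteq \mathscr{A} G_{x_0}$ automatically, so only the reverse inclusion needs work. Take any $\alpha \in \mathscr{A}$ and $\beta \in G_{x_0}$; then $(\alpha\beta)x_0 = \alpha x_0 \in \mathscr{A} x_0$, so $\alpha\beta \in \zeta_{x_0}^{-1}(\mathscr{A} x_0)$, which by (1) equals $\mathscr{A}$. Thus $\mathscr{A} G_{x_0} \subseteq \mathscr{A}$, as required. The fact that $\mathscr{A} G_{x_0} = \mathscr{A}$ is equivalent to $\mathscr{A}$ being a disjoint union of left cosets is a standard observation about subgroup-stable subsets.

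For (2) $\Rightarrow$ (3), write $\mathscr{A} = \bigsqcup_{i=1}^{s} \gamma_i G_{x_0}$ as a disjoint union of $s$ distinct left cosets. Since distinct left cosets are mapped by $\zeta_{x_0}$ to distinct points of $X$ (each coset is a full fiber), the images $\gamma_1 x_0, \dots, \gamma_s x_0$ are distinct, so $|\mathscr{A} x_0| = s$; and $|\mathscr{A}| = sk = k \cdot |\mathscr{A} x_0|$.

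For (3) $\Rightarrow$ (1), I would use the basic inclusion $\mathscr{A} \subseteq \zeta_{x_0}^{-1}(\zeta_{x_0}(\mathscr{A})) = \zeta_{x_0}^{-1}(\mathscr{A} x_0)$, which holds for any map and any subset. Since $\zeta_{x_0}^{-1}(\mathscr{A} x_0)$ is the disjoint union of the $|\mathscr{A} x_0|$ fibers above the points of $\mathscr{A} x_0$, each of size $k$, it has cardinality $k \cdot |\mathscr{A} x_0|$, which by hypothesis (3) equals $|\mathscr{A}|$. A subset contained in a finite set of the same cardinality must coincide with it, giving (1).

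The proof is essentially routine once the bijection $G/G_{x_0} \leftrightarrow X$ is in hand; there is no real obstacle beyond keeping careful track of which inclusions are free and which require a hypothesis. The only mildly delicate point is the step (1) $\Rightarrow$ (2), where one must verify closure under right multiplication by $G_{x_0}$ using that $\beta$ fixes $x_0$; the rest is counting.
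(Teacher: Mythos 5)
Your proof is correct and follows essentially the same route as the paper's: the cycle $(1)\Rightarrow(2)\Rightarrow(3)\Rightarrow(1)$, with each step resting on the fact that the fibers of $\zeta_{x_0}$ are exactly the left cosets of $G_{x_0}$, each of size $k$. The only cosmetic difference is in $(1)\Rightarrow(2)$, where you verify right-stability under $G_{x_0}$ element by element while the paper reads the coset decomposition directly off the fiber decomposition of $\zeta_{x_0}^{-1}(\mathscr{A}x_0)$; both are equally valid.
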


\begin{proof}
(1) $\Rightarrow$ (2).
From the equivalence given in Equation
 \eqref{eq equivalence G-sets},
for any $x\in X$,
we have $\zeta_{x_0}^{-1}(x)=\ga_x G_{\!x_0}$ where
$\zeta_{x_0}(\ga_x) =\ga_x x_0 =x$.
Thus, from (1) we get
$$
{\scr A}=\bigcup_{x\in\zeta_{x_0}({\scr A})} \ga_x G_{\!x_0}.
$$
Since the union is taken over distinct left cosets of $G_{x_0}$,
 it follows that $\mathscr{A}$ is a disjoint union of these cosets,
 implying (2).

(2)  $\Rightarrow$ (3).
If ${\scr A}=\bigcup_{i=1}^\ell \ga_i G_{\!x_0}$ is a disjoint union,
then by the equivalence Equation \eqref{eq equivalence G-sets} again,
we have
$${\scr A}x_0=\{\ga_1x_0,\cdots,\ga_\ell x_0\},$$
where
$\ga_1x_0,\cdots,\ga_\ell x_0$ are distinct in $X$.
Thus, we obtain
$|{\scr A}|=k\!\cdot\!\ell=k\!\cdot\!|{\scr A}x_0|$, giving (3).

(3)  $\Rightarrow$ (1).
Since we have established that
 ${\scr A}\subseteq \zeta_{x_0}^{-1}({\scr A}x_0)$, and
since $\zeta_{x_0}^{-1}(x)=\ga_x G_{x_0}$ for $x\in X$,
we find that
$$
|\zeta_{x_0}^{-1}({\scr A}x_0)|=k\!\cdot\!|{\scr A}x_0|=|{\scr A}|,$$
which demonstrates that (1) holds.
\end{proof}

If one of its three statements of Lemma \ref{lem x_0 closed} holds, then we say that
 ${\scr A}$ is 
 an {\em $x_0$-closed subset} of $G$.
 The following corollary exhibits various properties of  $x_0$-closed subsets.

\begin{corollary} \label{cor x_0 closed}
Let ${\scr A}$, ${\scr B}$ be subsets of $G$.
The following three statements hold.
\begin{itemize}
\item[{\rm(1)}] If ${\scr A}$ is $x_0$-closed,
then so is ${\scr B}{\scr A}$.

\item[{\rm(2)}] If both ${\scr A}$ and ${\scr B}$ are $x_0$-closed,
then so are ${\scr A}\cup{\scr B}$ and ${\scr A}\cap {\scr B}$.

\item[{\rm(3)}] If both ${\scr A}$ and ${\scr B}$ are $x_0$-closed, then
$$
\big({\scr A}\cap{\scr B}\big)x_0
=({\scr A}x_0)\cap({\scr B}x_0).
$$
\end{itemize}
\end{corollary}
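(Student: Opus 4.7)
The plan is to work throughout with the second equivalent characterization from Lemma \ref{lem x_0 closed}, namely that ${\scr A}$ is $x_0$-closed if and only if ${\scr A}G_{\!x_0}={\scr A}$. This reformulates ``disjoint union of cosets'' into a single identity of sets, which interacts cleanly with products, unions, and intersections.

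For (1), I would simply compute $({\scr B}{\scr A})G_{\!x_0}={\scr B}({\scr A}G_{\!x_0})={\scr B}{\scr A}$, using associativity of the set product (noted in the paragraph preceding Lemma \ref{lem x_0 closed}) and the hypothesis that ${\scr A}G_{\!x_0}={\scr A}$. For the union part of (2), the distributivity $({\scr A}\cup{\scr B})G_{\!x_0}=({\scr A}G_{\!x_0})\cup({\scr B}G_{\!x_0})$ already stated in that same paragraph reduces the claim to ${\scr A}\cup{\scr B}$. For the intersection part, I would argue directly: if $\al\in{\scr A}\cap{\scr B}$ and $\be\in G_{\!x_0}$, then $\al\be\in{\scr A}G_{\!x_0}={\scr A}$ and $\al\be\in{\scr B}G_{\!x_0}={\scr B}$, so $\al\be\in{\scr A}\cap{\scr B}$; this gives $({\scr A}\cap{\scr B})G_{\!x_0}\subseteq{\scr A}\cap{\scr B}$, while the reverse inclusion is trivial because $1_G\in G_{\!x_0}$.

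The only substantive step is (3). The inclusion $({\scr A}\cap{\scr B})x_0\subseteq({\scr A}x_0)\cap({\scr B}x_0)$ is immediate. For the reverse, take $x\in({\scr A}x_0)\cap({\scr B}x_0)$, write $x=\al x_0=\be x_0$ with $\al\in{\scr A}$, $\be\in{\scr B}$. Then $\be^{-1}\al\in G_{\!x_0}$, so $\al\in\be G_{\!x_0}\subseteq{\scr B}G_{\!x_0}={\scr B}$ by the $x_0$-closedness of ${\scr B}$. Hence $\al\in{\scr A}\cap{\scr B}$ and $x=\al x_0\in({\scr A}\cap{\scr B})x_0$.

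There is no real obstacle here; the only subtle point is (3), where one must use $x_0$-closedness to transfer from the representative $\be$ to the other representative $\al$ in the same coset. I would organize the write-up as three short items corresponding to (1), (2), (3), with (2) split into its union and intersection halves, keeping everything at the level of the $G_{\!x_0}$-coset identities rather than re-deriving things from the equivalence with Lemma \ref{lem x_0 closed}(3).
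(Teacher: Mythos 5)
Your proposal is correct and follows essentially the same route as the paper: all three parts rest on the characterization ${\scr A}G_{\!x_0}={\scr A}$ from Lemma \ref{lem x_0 closed}(2). The only cosmetic difference is in (3), where you argue pointwise (transferring from the representative $\be$ to $\al$ via $\be^{-1}\al\in G_{\!x_0}$) instead of the paper's explicit enumeration of the shared cosets of $G_{\!x_0}$; both arguments are valid and of the same substance.
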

\begin{proof}
(1). Since ${\scr B}{\scr A}G_{x_0}={\scr B}{\scr A}$,
by Lemma \ref{lem x_0 closed}(2), ${\scr B}{\scr A}$ is $x_0$-closed.

(2). If $\ga\in {\scr A}\cap {\scr B}$,
then $\ga G_{x_0}\subseteq{\scr A}$ and $\ga G_{x_0}\subseteq{\scr B}$,
hence $\ga G_{x_0}\subseteq {\scr A}\cap {\scr B}$.
By Lemma~\ref{lem x_0 closed}(2), ${\scr A}\cap{\scr B}$
is $x_0$-closed.

(3).
Assume that ${\scr A}$ and ${\scr B}$
 are disjoint unions of left cosets of $G_{x_0}$ as follows:
\begin{align*}
{\scr A}=\al_1G_{x_0}\cup\cdots\cup \al_i G_{x_0}\cup
\ga_1 G_{x_0}\cup\cdots\cup \ga_j G_{x_0}, \\[2pt]
{\scr B}=\be_1G_{x_0}\cup\cdots\cup \be_{i'} G_{x_0}\cup
\ga_1 G_{x_0}\cup\cdots\cup \ga_j G_{x_0},
\end{align*}
such that
${\scr A}\cap {\scr B}=\ga_1 G_{x_0}\cup\cdots\cup \ga_j G_{x_0}$.
Then we have
\begin{align*}
({\scr A}\cap {\scr B})x_0
=\{\ga_1 x_0,\cdots,\ga_j x_0\}
=({\scr A}x_0) \cap ({\scr B}x_0),
\end{align*}
which completes the proof.
\end{proof}

Note that the condition
``both $\scr A$ and $\scr B$ are $x_0$-closed''
stated in Corollary~\ref{cor x_0 closed}~(3) is necessary.
For example,  if we partition $G_{\!x_0}$ into two subsets ${\scr A}$ and ${\scr B}$,
then
$({\scr A}\cap {\scr B})x_0=\emptyset\ne\{x_0\}=
({\scr A}x_0) \cap ({\scr B}x_0)$.

\begin{remark}\label{rk block} \rm
(1)~ Assume that $\emptyset\neq B\subseteq X$.
If for any $\al,\be\in G$ either $\al B=\be B$ or $\al B\cap\be B=\emptyset$,
then we say that $B$ is a {\em block} of the transitive $G$-set~$X$.
The single point $\{x\}$ ($x\in X$) and $X$
itself are referred to as the \emph{trivial blocks}. Obviously,
 $B$ is a block if and only if there are $\ga_1,\cdots,\ga_r\in G$
such that
$$
X=(\ga_1 B)\cup \cdots\cup (\ga_r B)
$$
 is a disjoint union (i.e., a partition of $X$), and for any $\al\in G$
 there is a unique index $i$, $1\le i\le r$, such that $\al B=\ga_i B$.

(2)~ Let $x_0\in B\subseteq X$
(implying $G_{\!x_0}\subseteq \zeta_{x_0}^{-1}(B)$).
It is known that $B$ is a block if and only if
$H=\zeta_{x_0}^{-1}(B)$ is a subgroup of $G$.
 If this condition holds, then
the partition
$$
X=(\ga_1 B)\cup \cdots\cup (\ga_r B)
$$
corresponds to the disjoint union
$$
G=(\ga_1 H) \cup\cdots\cup (\ga_r H)
$$
of the left cosets of the subgroup $H=\zeta_{x_0}^{-1}(B)$;
see \cite[\S3 Proposition 9]{AB}. 
\end{remark}

For any subset ${\scr S}\subseteq G$, following \cite{FHX19}
(where
$G$ is abelian in \cite{FHX19},
but not necessarily  abelian here), 
we define the right stabilizer as
\begin{align} \label{X def right stabilizer}
 G_{\!\scr S}=\{\al\:|\; \al\in G, {\scr S}\al={\scr S}\}.
\end{align}
It follows that $G_{\!\scr S}$ is a subgroup of $G$ because
for $\al,\be\in G_{\!\scr S}$, we have
${\scr S}(\al\be)=({\scr S}\al)\be={\scr S}\be={\scr S}$.
We call $G_{\!\scr S}$ the {\em right stabilizer} of ${\scr S}$ in $G$.

If ${\scr S}$ is an $x_0$-closed subset of $G$,
then ${\scr S}G_{\!x_0}={\scr S}$, see Lemma \ref{lem x_0 closed}(2).
Thus, we have
\begin{align} \label{eq G_x_0 < G_S}
 G_{\!x_0}\le G_{\!\scr S}\le G.
\end{align}

The next lemma reveals that the
image of the subgroup $G_{\!\scr S}\leq G$
(under the surjective map $\zeta_{x_0}$)
is a block of
the transitive $G$-set $X$.
\begin{lemma} \label{lem def X_S}
Let $\emptyset\neq S \subseteq X$, ${\scr S} = \zeta_{x_0}^{-1}(S) \subseteq G$,
${\scr S}'=G-{\scr S}$,  and let $G_{\!\mathscr{S}}$ be the right stabilizer
of $\mathscr{S}$ in $G$ as defined in Equation \eqref{X def right stabilizer}.
Denote
\begin{align*} 
 X_{\!S} =\zeta_{x_0}(G_{\!\scr S}) =G_{\!\scr S}x_0.
\end{align*}
Then ${\scr S}$ and $G_{\!\scr S}$ are $x_0$-closed,
 $X_S$ is a block of the transitive $G$-set~$X$,
and the following statements  hold.
\begin{itemize}
\item[{\rm(1)}] $|{\scr S}|=k\!\cdot\!|S|$ and
$|G_{\!\scr S}|=k\!\cdot\!|X_{\!S}|$,
 where $k=|G_{\!x_0}|$ was assumed as in Remark~\ref{rk k m}.

\item[{\rm(2)}] There are $\ga_{1},\cdots,\ga_{\ell}\in G$ such that
$$
S=(\ga_1 X_{\!S})\cup\cdots\cup (\ga_{\ell}X_{\!S})
$$
is a disjoint union.
In particular, $|X_{\!S}|$ is a divisor of $|S|$, and
$|S|=|X_{\!S}|$ if and only if $S=\ga X_{\!S}$ forms a block.

\item[{\rm(3)}] $X_{\!S}=\bigcap_{\al\in {\scr S}}\al^{-1}S.$

\item[{\rm(4)}] If $\al'\in{\scr S}'$, then
 $(\al'^{-1}S)\cap X_S=\emptyset$.
 \end{itemize}
\noindent
{\rm (Definition: We call $X_S$ the
 {\em block associated with the subset $S$} of $X$.)}
\end{lemma}
\begin{proof}
By Lemma \ref{lem x_0 closed}(1), ${\scr S}$ is $x_0$-closed,
hence Equation \eqref{eq G_x_0 < G_S} holds,  ensuring that  $x_0\in X_S$ and
$G_{\!\scr S}G_{\!x_0}=G_{\!\scr S}$.
The latter equality implies that $G_{\!\scr S}$ is also $x_0$-closed
(see Lemma~\ref{lem x_0 closed}(2)). We then have
\begin{align*} 
 G_{\!\scr S}=\zeta_{x_0}^{-1}(\zeta_{x_0}(G_{\!\scr S}))=\zeta_{x_0}^{-1}(X_S).
\end{align*}
By Remark \ref{rk block}(2), $X_S$ is a block.

(1).  Since both ${\scr S}$ and $G_{\!\scr S}$ are $x_0$-closed,
by Lemma \ref{lem x_0 closed}(3), (1) holds.

(2). For $\ga\in{\scr S}$, by the definition in Equation
\eqref{X def right stabilizer}
 of $G_{\!\scr S}$, the left coset $\ga G_{\!\scr S}\subseteq {\scr S}$.
In this way, we can find $\ga_{1},\cdots,\ga_{\ell}\in G$ such that
$$
 {\scr S}=(\ga_1 G_{\!\scr S})\cup\cdots\cup(\ga_{\ell} G_{\!\scr S})
$$
is a disjoint union. Therefore, we have
$$
 S={\scr S}x_0=
\big((\ga_1 G_{\!\scr S})\cup\cdots\cup(\ga_{\ell} G_{\!\scr S})\big)x_0
=(\ga_1 G_{\!\scr S}x_0)\cup\cdots\cup(\ga_{\ell} G_{\!\scr S}x_0),
$$
yielding
$$
S=(\ga_1 X_{\!S})\cup\cdots\cup (\ga_{\ell}X_{\!S}).
$$
Additionally, for $1\le i \neq j\le \ell$, we have
$$
(\ga_i X_{\!S})\cap (\ga_j X_{\!S})=
(\ga_i G_{\!\scr S}x_0)\cap (\ga_j G_{\!\scr S}x_0)
=\big((\ga_i G_{\!\scr S})\cap (\ga_j G_{\!\scr S})\big)x_0
=\emptyset x_0=\emptyset,
$$
where the second equality follows from
Corollary \ref{cor x_0 closed}(3) because $\ga_i G_{\!\scr S}$ is also $x_0$-closed.

(3). By definition, $X_S=G_{\!\scr S}x_0$. Let $\be\in G$.
We have
$\be x_0\in G_{\!\scr S}x_0$ if and only if
$\be\in G_{\!\scr S}$ (because $G_{\!\scr S}$ is $x_0$-closed).
By the definition of  $G_{\!\scr S}$ in Equation~\eqref{X def right stabilizer},
 $\be x_0\in G_{\!\scr S}x_0$ if and only if
$\al\be\in{\scr S}$ for any $\,\al\in{\scr S}$.
That is to say, $\be x_0\in G_{\!\scr S}x_0$ if and only if
$\be\in\al^{-1}{\scr S}$ for any $\,\al\in{\scr S}$.
Hence, as $\al^{-1}{\scr S}$ is $x_0$-closed for any $\,\al\in{\scr S}$,
we conclude that
$\be x_0 \in G_{\!\mathscr{S}} x_0$ if and only if
$$
\be x_0 \in \al^{-1} {\scr S} x_0 = \al^{-1} S, ~\text{ for any } \al \in \mathscr{S}.
$$
Thus, we have
$$X_{\!S}=\bigcap_{\al\in {\scr S}}\al^{-1}S.$$

(4). Suppose that $\al'^{-1}S\cap X_S\neq\emptyset$. Then
there is  $\be\in{\scr S}$ such that
(recall that both $\al'^{-1}{\scr S}$ and $G_{\!\scr S}$ are $x_0$-closed)
$$
 \al'^{-1}\be x_0\in\al'^{-1}S\cap X_S
=\big(\al'^{-1}{\scr S}x_0\big)\cap \big(G_{\!{\scr S}}x_0\big)
=\big(\al'^{-1}{\scr S}\cap G_{\!{\scr S}}\big) x_0.
$$
We have $\al'^{-1}\be\in G_{\scr S}$, which implies
 ${\scr S}\al'^{-1}\be={\scr S}$.
Consequently, there exists    $\be_1\in{\scr S}$
such that $\be_1\al'^{-1}\be=\be$.
This leads to $\al'=\be_1\in{\scr S}$,
 a contradiction to the assumption that   $\al'\in{\scr S}'$.
\end{proof}

The next result plays an important role in the decomposition of $X$
into disjoint subsets.
\begin{lemma}\label{X TS neq G}
Let $\emptyset\ne S\,\subsetneqq\, X$ and $S'=X-S$.
Let ${\scr A}\subseteq G$. The following two statements
are equivalent:
\begin{itemize}
\item[\rm(1)] ${\scr A}S\ne X$.

\item[\rm(2)] There exists an $x\in X$ such that ${\scr A}^{-1}x\subseteq S'$.
\end{itemize}
\end{lemma}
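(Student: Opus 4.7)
The plan is to prove this lemma by direct unwinding of the definitions: both conditions are assertions about the image of a group action on $X$, and the equivalence should fall out once we translate ``$x \notin {\scr A}S$'' into a statement about inverse translates. No semisimplicity, no Fourier machinery, and no structural results from earlier in the section are needed; only the basic associativity/inversion identities $\alpha^{-1}(\alpha s) = s$ and $({\scr A}^{-1})y = \{\alpha^{-1}y \mid \alpha \in {\scr A}\}$.

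First I would establish (1) $\Rightarrow$ (2). Suppose ${\scr A}S \ne X$. Since ${\scr A}S \subseteq X$, there exists some $x \in X$ with $x \notin {\scr A}S$. By the definition ${\scr A}S = \{\alpha s \mid \alpha \in {\scr A},\, s \in S\}$, this means that for every $\alpha \in {\scr A}$ and every $s \in S$ we have $\alpha s \ne x$, equivalently (applying $\alpha^{-1}$ and using $\alpha^{-1}(\alpha s) = s$) $\alpha^{-1} x \ne s$ for all $s \in S$. Thus $\alpha^{-1}x \notin S$, i.e., $\alpha^{-1}x \in S'$, for every $\alpha \in {\scr A}$. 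This is exactly ${\scr A}^{-1} x \subseteq S'$.

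For the converse (2) $\Rightarrow$ (1), I would simply reverse the chain. If $x \in X$ satisfies ${\scr A}^{-1} x \subseteq S'$, then $\alpha^{-1} x \notin S$ for all $\alpha \in {\scr A}$, so $x \ne \alpha s$ for any $\alpha \in {\scr A}$ and $s \in S$. Hence $x \notin {\scr A}S$, which forces ${\scr A}S \ne X$ (because $x \in X$).

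There is really no obstacle here: the only thing to be careful about is that the two conditions are contrapositives of each other through the bijection $s \mapsto \alpha s$ on $X$, and one must keep track of which side of the action is being inverted. Consequently the proof is a couple of lines of set-theoretic rewriting, and it fits naturally as a concluding preparatory lemma before the main theorem of Section~\ref{s u p for X F}.
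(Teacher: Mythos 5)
Your proof is correct and is essentially the same argument as the paper's: the paper phrases the key step via De Morgan's law, $x\in\bigcap_{\al\in{\scr A}}(X-\al S)=\bigcap_{\al\in{\scr A}}\al S'$, while you unwind it element-by-element, but both amount to translating $x\notin{\scr A}S$ into $\al^{-1}x\in S'$ for all $\al\in{\scr A}$. No gap.
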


\begin{proof}
(1)\;$\Rightarrow$\;(2).
If $X\supsetneqq {\scr A}S=\bigcup_{\al\in {\scr A}}\al S$,
then there is an $x\in X-\bigcup_{\al\in{\scr A}}\al S$.
By De Morgan's law, we have
$$
x\in\bigcap_{\al\in{\scr A}}(X-\al S)
=\bigcap_{\al\in{\scr A}}\al S',
$$
which implies $\al^{-1}x\in S'$ for any $ \al\in{\scr A}$,
thereby proving (2).

(2)\;$\Rightarrow$\;(1). The converse follows
by reversing the argument outlined above.
\end{proof}

By virtue of Lemma \ref{X TS neq G},
we conclude this subsection with the following result which presents a
disjoint decomposition of $X$ into ${\scr S}'^{-1}S$ and $ X_{S}$.
\begin{corollary}\label{X=S'-1S cup X_S}
Let $\emptyset\ne S\subsetneqq X$ and $S'=X-S$.
Let ${\scr S}=\zeta_{x_0}^{-1}(S)$, ${\scr S}'=\zeta_{x_0}^{-1}(S')$ and
$X_{\!S}=G_{\!\scr S}x_0$.
Then
$$
 X=({\scr S}'^{-1}S)\cup X_{S}, \quad
({\scr S}'^{-1}S)\cap X_{\!S}=\emptyset.
$$
\end{corollary}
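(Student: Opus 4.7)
The plan is to handle the disjointness and the covering assertions separately. The disjointness $({\scr S}'^{-1} S) \cap X_S = \emptyset$ is almost immediate from Lemma \ref{lem X_S}(4): writing ${\scr S}'^{-1} S = \bigcup_{\al' \in {\scr S}'} \al'^{-1} S$, each set in the union meets $X_S$ trivially by that lemma, so the union does as well.

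For the covering $X \subseteq ({\scr S}'^{-1} S) \cup X_S$, I would prove the equivalent statement $X \setminus ({\scr S}'^{-1} S) \subseteq X_S$. Take $x \in X$ with $x \notin {\scr S}'^{-1} S$; unwinding the definition, this says $\al' x \notin S$ for every $\al' \in {\scr S}'$, equivalently ${\scr S}' x \subseteq S'$. Using transitivity, fix $\al \in G$ with $\al x_0 = x$, so $({\scr S}' \al) x_0 \subseteq S'$. Each $\be \in {\scr S}' \al$ then satisfies $\be x_0 \in S'$, and by the defining equality $\zeta_{x_0}^{-1}(S') = {\scr S}'$ we get $\be \in {\scr S}'$; thus ${\scr S}' \al \subseteq {\scr S}'$. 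Since right translation by $\al$ is a bijection of $G$, the cardinalities agree and the inclusion is in fact an equality ${\scr S}' \al = {\scr S}'$. Taking complements in $G$ and using $(G \setminus {\scr S})\al = G \setminus {\scr S}\al$, this becomes ${\scr S} \al = {\scr S}$, i.e., $\al \in G_{\!\scr S}$; hence $x = \al x_0 \in G_{\!\scr S} x_0 = X_S$, as required.

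The main obstacle I foresee is purely notational: one must move back and forth between subsets of $G$ and their images in $X$ under $\zeta_{x_0}$, leveraging the $x_0$-closedness developed in Lemma \ref{lem x_0 closed} and Corollary \ref{cor x_0 closed}, and exploiting the simple fact that right translation on $G$ commutes with set complementation. Once those observations are in place, the proof collapses to a short complement-and-cardinality manipulation. A slightly more compact variant would invoke Lemma \ref{X TS neq G} with ${\scr A} = {\scr S}'^{-1}$ to package the first step, but the direct derivation above seems more transparent.
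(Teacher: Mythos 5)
Your proof is correct, but the covering half takes a genuinely different route from the paper. The paper proves $X=({\scr S}'^{-1}S)\cup X_S$ by a counting argument: for each $\al\in{\scr S}^{-1}$ it applies Lemma~\ref{X TS neq G} to ${\scr A}={\scr S}'^{-1}\cup\{\al\}$, using the bound $|{\scr A}^{-1}x|\ge|{\scr A}|/k>|S'|$ from Remark~\ref{rk k m} to conclude ${\scr A}S=X$, and then intersects over all $\al\in{\scr S}$ and invokes the identity $X_S=\bigcap_{\al\in{\scr S}}\al^{-1}S$ of Lemma~\ref{lem X_S}(3). You instead argue pointwise: if $x=\al x_0\notin{\scr S}'^{-1}S$ then ${\scr S}'\al\subseteq{\scr S}'$, and since right translation by $\al$ is a bijection this forces ${\scr S}'\al={\scr S}'$, hence (taking complements) ${\scr S}\al={\scr S}$ and $\al\in G_{\!\scr S}$, so $x\in X_S$. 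Each step checks out, including the translation between $X$ and $G$ via $\zeta_{x_0}^{-1}(S')={\scr S}'$ and the fact that $G={\scr S}\cup{\scr S}'$ is a disjoint union. Your argument is shorter and avoids both Lemma~\ref{X TS neq G} and the cardinality bookkeeping of Remark~\ref{rk k m}; what the paper's route buys is the intermediate equation $X=({\scr S}'^{-1}\cup\{\al\})S$ (Eq.~\eqref{X TS}), which is cited again in the proof of Theorem~\ref{thm act s uncertainty dim} to obtain Eq.~\eqref{X= union alpha S}, so if you adopted your proof you would need to supply that step of the theorem separately. The disjointness half is identical in both: a direct appeal to Lemma~\ref{lem X_S}(4).
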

\begin{proof}
By the assumption, we have
(with $k=|G_{\!x_0}|$ as in Remark \ref{rk k m}):
$$
 G={\scr S}\cup {\scr S}', ~~ {\scr S}\cap {\scr S}'=\emptyset,
   \quad |{\scr S}|=k\!\cdot\!|S|,~|{\scr S}'|=k\!\cdot\!|S'|.
$$
For any $\al\in {\scr S}^{-1}$, set ${\scr A}={\scr S}'^{-1}\cup\{\al\}$;
then $|{\scr A}|>|{\scr S}'|=k\!\cdot\!|S'|$.
For any $x\in X$, by Remark \ref{rk k m} we have
$$
|{\scr A}^{-1}x|\ge |{\scr A}|/k>k\!\cdot\!|S'|/k=|S'|.
$$
Thus,  Lemma~\ref{X TS neq G}(2) fails to hold, or equivalently,
Lemma~\ref{X TS neq G}(1) fails to hold. We then have
\begin{align} \label{X TS}
 X={\scr A} S=\big({\scr S}'^{-1}\cup\{\al\}\big)S
 =({\scr S}'^{-1}S)\cup (\al S),\quad \mbox{for any}~\alpha\in{\scr S}^{-1}.
\end{align}
This implies that
$X-{\scr S}'^{-1}S\subseteq \al S$ for any $\al\in {\scr S}^{-1}$.
By Lemma \ref{lem def X_S}(3), we have
$$
X-{\scr S}'^{-1}S\subseteq \bigcap_{\al\in {\scr S}} \al^{-1}S = X_S;
$$
that is,  $X=({\scr S}'^{-1}S) \cup X_S$.
By Lemma \ref{lem def X_S}(4),   we have
$({\scr S}'^{-1}S)\cap X_S=\emptyset. $ 
\end{proof}

\subsection{Main results}

Recall that the $\F G$-module $\F^G$ is naturally isomorphic
 to
the left regular $\F G$-module $\F G$,
cf. Equation \eqref{eq F^G to FG}. Under this, any function $g\in\F^G$ is identified with the element
$g=\sum_{\al\in G}g(\al)\al\in\F G$.
Analogously,  the $\F G$-module $\F^X$ is isomorphic in a natural way to
the permutation $\F G$-module $\F X$,
cf. Equation \eqref{eq F^X to FX}.
Here,  any function $f\in\F^X$ is identified with the element
$f=\sum_{x\in X}f(x)x\in\F X$.
The surjective map defined in Equation \eqref{def zeta_x_0 on G}
induces another surjective linear map (which we denote  by $\zeta_{x_0}$ again) given by
\begin{align}\label{def zeta_x_0 on FG}
\zeta_{x_0}\!:~\F G\to\F X, \quad
\zeta_{x_0}\Big(\sum\limits_{\al\in G}g(\al)\al\Big)
=\sum\limits_{\al\in G} g(\al) \al x_0, ~~\hbox{for any}\, g\in\F^G.
\end{align}
The map is a surjective $\F G$-module homomorphism because
for $\be\in G$, $g\in\F^G$, we have
$$
\zeta_{x_0}\Big(\be\sum\limits_{\al\in G}g(\al)\al\Big)
=\zeta_{x_0}\Big(\sum\limits_{\al\in G}g(\al)\be\al\Big)
=\sum\limits_{\al\in G}g(\al)\be\al x_0
=\be\zeta_0\Big( \sum\limits_{\al\in G}g(\al)\al\Big).
$$

Let $0\ne f\in\F^X$ and $S={\rm supp}(f)\subseteq X$. For $\al,\be\in G$,
if $\al S\ne \be S$, it follows from Equation \eqref{eq supp(alpha f)} that
${\rm supp}(\al f)=\al S\neq\be S={\rm supp}(\be f)$. Hence
$\alpha f $ and $\be f$ are linearly independent
(i.e. there is no $0\neq c\in \F$ such that $\be f=c\cdot \al f$).

\begin{definition} \label{def G-linear function} \rm
Assume that $0\ne f\in\F^X$, $S={\rm supp}(f)$
and $\emptyset\neq{\scr A}\subseteq G$.
We say that $f$ is an {\em ${\scr A}$-linear function} if
for any $\al,\be\in {\scr A}$ the following two statements hold:
	
(i)~ either $\al S=\be S$ or $\al S\cap\be S=\emptyset$;

(ii)~
$\al f$ and $\be f$ are linearly dependent if $\al S=\be S$.

\noindent
Similarly to Remark \ref{rk block}(1), (i) is equivalent to that
there are $\al_1,\cdots,\al_t\in{\scr A}$ such that
${\scr A}S=\al_1 S\cup\cdots\cup\al_t S$ is a disjoint union,
and any $\al S$ for $\al\in{\scr A}$ must coincide with one of
$\al_1 S, \cdots,\al_t S$.
\end{definition}

If $|{\scr A}|=1$ or $|S|=1$, then it is clear that $f$ is an ${\scr A}$-linear function.

\begin{remark} \label{rk G-linear}  \rm
Consider the special case that ${\scr A}=G$ and $0\ne f\in\F^X$.
Assume that $f$ is a $G$-linear function. Fix $x_0\in S={\rm supp}(f)$.
Definition~\ref {def G-linear function}(i) implies that
$S$ is a block, hence
${\scr S}=\zeta_{x_0}^{-1}(S)$ is a subgroup of $G$
and $G_{\!x_0}\subseteq {\scr S}$, cf. Remark \ref{rk block}(2).
For $\al\in G$, $\al S=S$ if and only if $\al{\scr S}={\scr S}$,
if and only if $\al\in{\scr S}$ (because ${\scr S}$ is a subgroup of $G$).
So, for $\al\in{\scr S}$, by Definition~\ref {def G-linear function}(ii),
$\al f$ and $f$ are linearly dependent, i.e.,
there is a non-zero scalar $c_\al\in\F^\times$ such that $\al f=c_\al f$.
Note that, if $\al\in G_{\!x_0}\le{\scr S}$, then $\al^{-1}\in G_{\!x_0}$ and
$c_\al f(x_0)=\al f(x_0)=f(\al^{-1}x_0)=f(x_0)$,
so that $c_\al=1$ (since $f(x_0)\ne 0$).
For any $\al,\be\in{\scr S}$, we have
$$
 c_{\al\be}f=(\al\be)f=\al(\be f)=\al(c_\be f)
=c_\be (\al f)=c_\be c_\al f ; 
$$
i.e., $c_{\al\be}=c_\be c_\al$, for all $\al,\be\in {\scr S}$.
Further, for $\be\in {\scr S}$,
$f(\be x_0)=\be^{-1} f(x_0)=c_{\be^{-1}} f(x_0)$ 
Set $c=f(x_0)$ (so, $0\neq c\in\F$),  and
set $\phi(\be)=c_{\be^{-1}}$ 
for $\be\in{\scr S}$.
Then
\begin{itemize} \leftskip15pt
\item[(\ref{rk G-linear}.\,i)] 
    $\phi: {\scr S}\to \F^\times$, $\phi(\be)=c_{\be^{-1}}$ for $\be\in{\scr S}$,
   is a group homomorphism with kernel $\supseteq G_{\!x_0}$;
and for $\al\in G$,
$ f(\alpha x_0)=\left\{\!\! \begin{array}{ll} c\,\phi(\alpha), & \al\in{\scr S};\\
	0, & \al\notin {\scr S}. \end{array}\right.$
\end{itemize}
That is exactly the function that makes the equality in 
Equation \eqref{u p X GGI} hold.
Conversely, if ${\scr S}$ is a subgroup of $G$ and 
(\ref{rk G-linear}.\,i) 
holds, then it follows from the above argument that $f$ is a $G$-linear function.
\end{remark}

\begin{remark}\label{rk F scr A f} \rm
Further, for any subset ${\scr A}\subseteq G$ and $f\in \F^X\cong \F X$,  denote
\begin{align*} 
   {\scr A}f=\big\{\,\alpha f\,\big|\,\alpha\in {\scr A}\,\big\}
	~~~\mbox{(which is a subset of $\F X$). }
\end{align*}
Let ``$\F{\scr A}f$'' denote the subspace of $\F X$ spanned by the subset ${\scr A}f$, and let
``$\dim\F{\scr A}f$'' denote the dimension of the subspace.
Denote $\F f=\F\{f\}$ for short.
In particular, $\F Gf$ is the subspace of $\F X$ spanned by the subset $Gf$,
which is exactly the $\F G$-submodule of $FX$ generated by~$f$.
It is clear that $\F{\scr A}f$ is a subspace of $\F Gf$.
\end{remark}

We are now ready to state and prove a key lemma.
If we take ${\scr A}=G$ in the lemma,
then $G\,{\cdot}\,{\rm supp}(f)=X$ and
Equation \eqref{u p X GGI} is reobtained.

\begin{lemma}\label{lem act s uncertainty dim}
Let $\F$ be any field, $G$ be any finite group,
$X$ be any transitive $G$-set and $0\ne f\in\F^X$.
Let $\emptyset \neq{\scr A}\subseteq G$. Then the following inequality holds:
\begin{align} \label{eq local uncertainty}
	\big|{\rm supp}(f)\big|\cdot \dim\F{\scr A}f
	\ge\big|{\scr A}{\cdot}\,{\rm supp}(f)\big|.
\end{align}
The inequality becomes an equality if and only if
$f$ is an ${\scr A}$-linear function.
\end{lemma}

\begin{proof}
Denote $S={\rm supp}(f)$. Take
$\al_1\in{\scr A}$. If $\al_1 S\subsetneqq{\scr A}S={\scr A}\!\cdot\kern1pt{\rm supp}(f)$,
then choose $\al_2\in {\scr A}$ such that
$\al_2 S\;{\not\subseteq}\;\al_1 S$.
Iteratively, we obtain $\al_1,\cdots,\al_{t}\in{\scr A}$
with $t\ge 1$ such that
\begin{align} \label{AS = S^-1 S}
	\begin{array}{l}
		\al_1S\cup\cdots\cup \al_{t}S={\scr A}S, \quad \mbox{and}\\[2mm]
		\al_iS~{\not\subseteq}~ \al_1S\cup\cdots\cup \al_{i-1}S, \quad i=2,\cdots, t.
	\end{array}
\end{align}
Note that ${\rm supp}(\al f)=\al S$, for each $\alpha\in G$,
cf. Equation~\eqref{eq supp(alpha f)}. For any $\alpha\in{\scr A}$,
${\rm supp}(\alpha f)\subseteq{\scr A}S$.
For $1<i\le t$,  the support of $\al_i f$ is not contained in
the union of the supports of $\al_{i-1}f,\cdots,\al_1 f$, see
Equations \eqref{AS = S^-1 S}; 
we see that $\al_i f$ cannot be expressed as
a linear combination of $\al_{i-1}f,\cdots,\al_1 f$.
Thus the following $t$ elements of $\F {\scr A} f$:
\begin{align*}  
	\al_1 f,~\cdots,~  \al_t f,
\end{align*}
are linearly independent.
In particular, the following inequality holds:
\begin{align*} 
	\dim \F{\scr A}f \; \geq\; t.
\end{align*}
Observe that $|\alpha S|=|S|$ for any $\alpha\in G$. We have
\begin{align} \label{AS >= S^-1 S}
	|S|\cdot\dim \F{\scr A}f
	\,\geq\, t\!\cdot\!|S| = \sum_{i=1}^{t}|\al_iS|
	\geq\big|\al_1S\cup\cdots\cup \al_{t}S\big|.
\end{align}
By Equation \eqref{AS = S^-1 S},
we get the following which is just a rewriting of Equation~\eqref{eq local uncertainty}:
\begin{align}  \label{eq AS<=}
	|S|\cdot\dim \F{\scr A}f\;\geq\; |{\scr A}S|\,;
\end{align}
and the inequality becomes an equality if and only if
the two inequalities in Equation \eqref{AS >= S^-1 S} both become equalities;
equivalently, the following two hold:
	
(\ref{eq AS<=}.\,i)~
${\scr A}S=\alpha_1 S\cup\cdots\cup\alpha_{t}S$ is a disjoint union;

(\ref{eq AS<=}.\,ii)~ $\alpha_1 f, \cdots, \alpha_{t}f$ are a basis of $\F{\scr A}f$.

\smallskip
Assume that the above two hold.
Let $\alpha\in{\scr A}$. By (\ref{eq AS<=}.\,ii),
$\alpha f$ is a linear combination of $\alpha_1 f, \cdots, \alpha_{t}f$;
assume that $\alpha f=c_{i_1}\alpha_{i_1}f+\cdots+ c_{i_k}\alpha_{i_k}f$
with $c_{i_1},\cdots,c_{i_k}\in\F$ ($1\le i_1<\cdots<i_k\le t$) being all non-zero.
By (\ref{eq AS<=}.\,i),
$$
 {\rm supp}(\alpha f)={\rm supp}(c_{i_1}\alpha_{i_1}f+\cdots+ c_{i_k}\alpha_{i_k}f)
 =\alpha_{i_1}S\cup\cdots\cup \alpha_{i_k}S
$$
is a disjoint union. However,  ${\rm supp}(\alpha f)=\alpha S$.
By computing the cardinalities of   both sides, we see that $k=1$,
$\alpha S=\alpha_{i_1}S$.
Thus $S$ satisfies the condition (i) of Definition \ref{def G-linear function}.
Denote $i_1=i$ for short, i.e., $\alpha S=\alpha_i S$, $\alpha f=c_i \alpha _i f$.
If $\beta\in{\scr A}$ is such that ${\rm supp}(\beta f)={\rm supp}(\alpha f)$.
By the same argument as above,
we have a $0\neq d_i\in\F$ such that $\beta f=d_i \alpha_i f$.
Hence $\beta f=d_ic_i^{-1}\alpha f$.
By Definition \ref{def G-linear function},  $f$ is an ${\scr A}$-linear function.
	
Conversely, assume that $f$ is an ${\scr A}$-linear function.
By Definition~\ref{def G-linear function}(i).
there are $\al_1,\cdots,\al_t\in{\scr A}$ such that
${\scr A}S=\al_1 S\cup\cdots\cup\al_t S$ is a disjoint union,
and any $\al S$ for $\al\in{\scr A}$ coincides with one of $\al_1 S, \cdots,\al_t S$.
Because ${\rm supp}(\al_i f)=\al_i S$,
the functions $\al_1 f,\cdots, \al_t f$ are linearly independent. Next,
let $g\in\F{\scr A}f$, i.e., there are $\be_j\in{\scr A}$ and $c_j\in\F$ such that
$g=\sum_{j\in J} c_j\be_j f$, where $J$ is a finite index set.
For each $\be_j$ there is a unique $\al_{k_j}$, $1\le k_j\le t$,
such that $\be_j S=\al_{k_j}S$;
by Definition~\ref{def G-linear function}(ii),
$\be_j f=d_j\al_{k_j}f$ for a $d_j\in\F$.
For $1\le i\le t$, set $J_i=\{ j\in J\,|\, k_j=i\}$.
Then the index set  $J=J_1\cup \cdots\cup J_t$ which is a disjoint union,
and
$$
 g =\sum_{i=1}^t \sum_{j\in J_i} c_j\be_j f
  =\sum_{i=1}^t \sum_{j\in J_i} c_jd_j \al_i f
  =\sum_{i=1}^t \Big(\sum_{j\in J_i} c_jd_j\Big)\al_i f,
$$
i.e., $g$ is a linear combination of $\al_1 f,\cdots, \al_t f$.
In conclusion,  $\al_1 f,\cdots, \al_t f$ are a basis of the subspace $\F{\scr A}f$.
Thus, both (\ref{eq AS<=}.\,i) and (\ref{eq AS<=}.\,ii) hold;
equivalently, Equation \eqref{eq AS<=} becomes an equality.
\end{proof}

Combining Lemma \ref{lem act s uncertainty dim} with Corollary \ref{X=S'-1S cup X_S},
we immediately get the following sharp uncertainty principle, as stated in Introduction.

\begin{theorem} \label{thm act s uncertainty dim}
Let $\F$ be any field, $G$ be any finite group,
$X$ be any transitive $G$-set and $0\ne f\in\F^X$.
Let $S={\rm supp}(f)$ and $S'=X-S$.
Fix an $x_0\in S$, let ${\scr S}=\zeta_{x_0}^{-1}(S)$,
 ${\scr S}'=\zeta_{x_0}^{-1}(S')$, and $X_S$ be as defined in Lemma \ref{lem def X_S}.
Then $\dim\F Gf-\dim\F{\scr S}'^{-1}f\ge 1$
and the following inequality holds:
\begin{align} \label{eq X s uncertainty dim}
  |{\rm supp}(f)|{\cdot} \dim\F Gf\ge
  |X|+\big(\dim\F Gf-\dim\F{\scr S}'^{-1}f\big) {\cdot}|{\rm supp}(f)|
	- |X_{{\rm supp}(f)}|.
\end{align}
The inequality becomes an equality if and only if
$f$ is an ${\scr S}'^{-1}$-linear function.
\end{theorem}

\begin{proof}
Applying Corollary \ref{X=S'-1S cup X_S}, we have
\begin{align} \label{eq |X|=...}
	|X|&=|{\scr S}'^{-1}S|+|X_{\!S}|.
\end{align}
Since we fix $x_0\in S $, i.e., $1_G\in {\scr S}$ (hence $1_G\in{\scr S}^{-1}$),
by Equation \eqref{X TS},
${\rm supp}(f)={\rm supp}(1_G f)=1_G S\;{\not\subseteq}\;{\scr S}'^{-\!1}\!S$,
hence $f\notin\F{\scr S}'^{-\!1}\!f$.
We see that $\F{\scr S}'^{-\!1}\!f$ is a proper subspace of $\F Gf$. Thus
$\dim\F Gf-\dim\F{\scr S}'^{-\!1}\!f\ge 1$.
Further,
\begin{align*}
	&  |X|+\big(\dim\F Gf-\dim\F{\scr S}'^{-1}f\big)\!\cdot\!|S| - |X_{S}| \\
	&= \;
	|S|{\cdot}\dim\F Gf- |S|{\cdot}\dim\F{\scr S}'^{-\!1}\!f +|X|  - |X_{S}|\\
	&=\;
	|S|{\cdot}\dim\F Gf  - |S|{\cdot}\dim\F{\scr S}'^{-\!1}\!f + |{\scr S}'^{-\!1}\!S|
	\qquad (\mbox{by Equation \eqref{eq |X|=...}})\\
	&= \; |S|{\cdot}\dim\F Gf
	- \big(\,|S|{\cdot}\dim\F{\scr S}'^{-\!1}\!f- |{\scr S}'^{-\!1}\!S|\,\big).
\end{align*}
Thus Equation~\eqref{eq X s uncertainty dim} is equivalent to the following:
$$
|S|{\cdot}\dim\F{\scr S}'^{-1}f \,\geq\, |{\scr S}'^{-1}S|.
$$
Applying Lemma \ref{lem act s uncertainty dim}
to the case where  ${\scr A}={\scr S}'^{-1}$,
we complete the proof of the theorem.
\end{proof}

Based on Theorem \ref{thm act s uncertainty dim}, we obtain
the following corollary which is the group-action version of
the sharpened uncertainty principle.

\begin{corollary}\label{cor act s uncertainty dim}
Let $\F$ be any field, $G$ be any finite group, 
$X$ be any transitive $G$-set, and $0\ne f\in\F^X$.
Then the following inequality holds:
\begin{align} \label{eq sharpened}
	\big|{\rm supp}(f)\big|\cdot \dim\F Gf
	\geq\big|X\big|+ \big|{\rm supp}(f)\big|  -\big|X_{{\rm supp}(f)}\big|.
\end{align}
The inequality becomes an equality if and only if
$f$ is an ${\scr S}'^{-1}$-linear function and
$\F f+\F{\scr S}'^{-1}f=\F Gf$,
where ${\scr S}'^{-1}$ is defined in Theorem~\ref{thm act s uncertainty dim}.
\end{corollary}

\begin{proof}
Because $\dim\F Gf-\dim\F{\scr S}'^{-1}f\ge 1$,
from Theorem \ref{thm act s uncertainty dim} we immediately obtain
Equation \eqref{eq sharpened}, where
the inequality becomes equality if and only if
$f$ is an ${\scr S}'^{-1}$-linear function and
$\dim\F Gf-\dim\F{\scr S}'^{-1}f=1$.
Since we fix $x_0\in S $, as argued after Equation \eqref{eq |X|=...},
$f\notin \F{\scr S}'^{-1}f$. Thus,
$\dim\F Gf-\dim\F{\scr S}'^{-1}f=1$ if and only if
$\F f+\F{\scr S}'^{-1}f=\F Gf$.
\end{proof}

We provide a small example to illustrate Theorem \ref{thm act s uncertainty dim}.
The lower bounds established in Theorem \ref{thm act s uncertainty dim}
are sharp because there are examples where these bounds are achieved with equality.

\begin{example}\label{example1} \rm
Consider the symmetric group of degree $3$, denoted by $G=S_3$.
Let $X=\{x_1,x_2,x_3\}$  be the set on which $G$ acts canonically
(take $x_0=x_1$ in Equation \eqref{def zeta_x_0 on G}).
Let $f\in\F^X$ be defined  as
$$
 f(x_1)=1, \qquad f(x_2)=1, \qquad f(x_3)=0.
$$
We have
\begin{align*}
  &~~ S=\,{\rm supp}(f)=\{x_1,x_2\};
  \quad {\scr S}\;=\zeta_{x_1}^{-1}(S)=\{(1), (23), (12),(123)\}; \\
  & {\scr S}'=G-{\scr S}=\{(13),  (132)\}, \quad
   {\scr S}'^{-1}=\{(13),  (123)\};\\
  & G_{\!\scr S}=\{(1),(23)\},  \quad X_S=\{x_1\}; \quad
     {\scr S}'^{-1}S=\big\{x_2,x_3\big\}=(13)S=(123)S.
\end{align*}
Obviously, $S$ and ${\scr S}'^{-1}$ satisfy Definition~\ref{def G-linear function}(i).
Simple algebraic calculations reveal that
$$
 (12)f(x_1)=f\big((12)^{-1}x_1\big)=f(x_2)=1.
$$
Proceeding in this manner,
we compile the following table of function values:
$$
\begin{array}{r|cccccc}
	& ~~ f ~ & (12)f & (13)f & (123)f & (23)f & (132)f  \\ \hline
	x_1  & ~ 1    & 1   &  0    &   0     &  1    & 1 \\
	x_2  & ~ 1   &  1   & 1    &  1      &  0    &  0  \\
	x_3  & ~ 0    &  0    &  1    & 1      & 1   &  1
\end{array}
$$
Because $(13)f=(123)f$, we see that
$$\dim\F{\scr S}'^{-1}f=1, \quad\mbox{and\quad
	$f$ is an ${\scr  S}'^{-1}$-linear function.}
$$
In the following we treat the example in two cases.

(1)~ Assume that the characteristic ${\rm char}\,\F$ is odd or zero.
Then the functions $f, (13)f, (23)f$ are linearly independent functions over $\F$.
This is evident because the corresponding matrix
\begin{align*} 
	\left(\begin{array}{ccc}
		1 & 0 & 1 \\
		1 & 1 & 0 \\
		0 & 1 & 1
\end{array}\right)
\end{align*}
has rank $3$. Consequently, $\dim\F Gf=3$, and
$$
\dim\F Gf-\dim\F{\scr S}'^{-1}f=2.
$$
In conclusion,   Equation \eqref{eq X s uncertainty dim} becomes an equality.
More explicitly, we have
\begin{align*}
& \big|{\rm supp}(f)\big|\cdot \dim\F Gf=2\!\cdot\! 3=6\,,~\mbox{and} \\
& \big|X\big|+(\dim\F Gf-\dim\F{\scr S}'^{-1}f)
  \big|{\rm supp}(f)\big| -\big|X_{{\rm supp}(f)}\big|=3+2\!\cdot\! 2-1=6.
\end{align*}
However, in this case Equation \eqref{eq sharpened} is
a proper inequality:
$$
|X|+|{\rm supp}(f)| -|X_{{\rm supp}(f)}|
=3+ 2-1=4 < 6 = |{\rm supp}(f)|\cdot \dim\F Gf.
$$
It reveals that  Equation \eqref{eq X s uncertainty dim}
is stronger than Equation \eqref{eq sharpened}.

(2)~
 Assume that ${\rm char}\,\F\!=\!2$. Then the matrix
$\left(\begin{array}{ccc}
	1 & 0 & 1 \\
	1 & 1 & 0 \\
	0 & 1 & 1
\end{array}\right)$ has rank~$2$, hence $\dim\F Gf=2$, and
$\dim\F Gf-\dim\F{\scr S}'^{-1}f=1$.
So Equation \eqref{eq sharpened} becomes an equality:
$$
 |X|+|{\rm supp}(f)| -|X_{{\rm supp}(f)}|
 =3+ 2-1=4=2\cdot 2 = |{\rm supp}(f)|\cdot \dim\F Gf.
$$
\end{example}

The following example (consisting of two small examples)
shows that if one of the (i) and (ii) of Definition \ref{def G-linear function}
is not satisfied then Equation \eqref{eq X s uncertainty dim} is a proper inequality.

\begin{example}\label{example2} \rm
(1)~ Let $G=S_3$, $X=\{x_1,x_2,x_3\}$,
as in Example \ref{example1}. Assume that ${\rm char}\,\F\ne 2,3$.
Take $f\in\F^X$ as follows:
$$
 f(x_1)=1, \qquad f(x_2)=2,\qquad f(x_3)=0.
$$
Fix $x_0=x_1$.  Then $S$, ${\scr S}$, ${\scr S}'$, ${\scr S}'^{-1}$,
$G_{\scr S}$, $X_{S}$ and ${\scr S}'^{-1}S$ are all the same as
described in Example \ref{example1}.
But the function value table is as follows:
$$
 \begin{array}{r|cccccc}
	& ~~ f ~ & (12)f & (13)f & (123)f & (23)f & (132)f  \\ \hline
	x_1  & ~ 1    & 2   &  0    &   0     &  1    & 2 \\
	x_2  & ~ 2   &  1   & 2    &  1      &  0    &  0  \\
	x_3  & ~ 0    &  0    &  1    & 2      & 2   &  1
\end{array}
$$
Though $S$ and ${\scr S}'^{-1}$ satisfy Definition~\ref{def G-linear function}(i),
 $f$ is not an ${\scr S}'^{-1}$-linear function since
$(13)f$ and $(123)f$ are linearly independent
(recall that ${\rm char}\,\F\ne 3$, so
$\left(\begin{array}{ccc} 2 & 1  \\  1 & 2 \end{array}\right)$
has rank~$2$).
It is easy to see that $\dim\F Gf=3$, $\dim\F{\scr S}'^{-1}f=2$.
So
\begin{align*}
	& |S|\cdot\dim\F Gf=2\cdot 3=6;\\
	& |X|+(\dim\F Gf-\dim\F{\scr S}'^{-1}f)\cdot|S|-|X_S|
	=3+(3-2)\cdot 2-1=4.
\end{align*}
In this case Equation \eqref{eq X s uncertainty dim} is a proper inequality.

(2)~ Take $G=\{1,\al,\al^2,\al^3\}$ to be a cyclic group of order $4$,
and $X=G$ to be the left regular set. Let $f\in\F^G$ as
$$
 f(1)=1,\qquad f(\al)=1,\qquad f(\al^2)=0,\qquad f(\al^3)=0.
$$
Fix $x_0=1$. Then
\begin{align*}
	& S={\rm supp}(f)=\{1,\al\},\qquad S'=\{\al^2,\al^3\}, \qquad
	S'^{-1}=\{\al,\al^2\}; \\
	& G_S=\{1\},\qquad X_S=G_S=\{1\}, \qquad
	S'^{-1}S=\{\al,\al^2,\al^3\}.
\end{align*}
Observe that $\al S=\{\al,\al^2\}$ and $\al^2 S=\{\al^2,\al^3\}$.
We see that for $S$ and $S'^{-1}$
Definition~\ref{def G-linear function}(i) does not hold,
hence $f$ is not an $S'^{-1}$-linear function, and
both Equation \eqref{eq X s uncertainty dim} and Equation \eqref{eq sharpened}
are strict inequalities. Indeed, from the following function value table:
$$
\begin{array}{r|cccc}
	& ~~ f ~ & \al f  & \al^2 f & \al^3 f  \\ \hline
	1 ~   &   ~ 1    &    0    &      0     &   1   \\
	\al ~  &   ~ 1    &    1    &      0     &   0    \\
	\al^2  &   ~ 0    &    1    &      1     &  0    \\
	\al^3  &   ~ 0    &    0    &      1     &   1
\end{array}
$$
we can see that $\dim\F Gf=3$, $\dim\F S'^{-1}f =2$. Then we have
\begin{align*}
	& |S|\cdot\dim\F Gf=2\cdot 3=6;\\
	& |X|+(\dim\F Gf-\dim\F{\scr S}'^{-1}f)\cdot|S|-|X_S|
	=4+(3-2)\cdot 2-1=5.
\end{align*}
\end{example}

As mentioned before Lemma \ref{lem act s uncertainty dim},
we recover the result
Equation~\eqref{u p X GGI} in \cite{GGI05} as follows.

\begin{corollary}\label{cor act uncertainty dim}
Let $\F$, $G$, $X$, $f$ and $\F Gf$ be as described
in Theorem \ref{thm act s uncertainty dim}.
Then we have the inequality:
\begin{align} \label{eq uncertainty}
	\big|{\rm supp}(f)\big|\cdot\dim\F Gf \ge \big|X\big|.
\end{align}
Let $S={\rm supp}(f)$, $x_0\in S$, ${\scr S}=\zeta_{x_0}^{-1}(S)$
and $f^{\!\scr S}=f\!\circ\!\zeta_{x_0}\in\F^{\!\scr S}$
(that is, $f^{\!\scr S}\!(\be)=f(\be x_0)$ for all $\be\in{\scr S}$).
The following two statements are equivalent:
\begin{itemize}
\item[{\rm(1)}] The equality in Equation \eqref{eq uncertainty} holds.


\item[{\rm(2)}] ${\scr S}$ is a subgroup of $G$, and
there are an element $c\in\F^\times$ and
 a group homomorphism $\phi: {\scr S}\to\F^\times$
with kernel ${\rm Ker}(\phi)\supseteq G_{\!x_0}$ such that $f^{\!\scr S}=c\phi$, i.e.,
 for $\al\in G$,
$ f(\alpha x_0)=\begin{cases} c\,\phi(\alpha), & \al\in{\scr S};\\
	0, & \al\notin {\scr S}. \end{cases}$
\end{itemize}
\end{corollary}

\begin{proof}
From Lemma \ref{lem act s uncertainty dim}, taking ${\scr A}=G$,
we get that Equation~\eqref{eq uncertainty} holds,
and that it becomes equality if and only if $f$ is a $G$-linear function.
By Remark~\ref{rk G-linear}, $f$ is a $G$-linear function if and only if
${\scr S}$ is a subgroup of $G$ and 
(\ref{rk G-linear}.\,i) 
in Remark~\ref{rk G-linear} holds.
That is, $f$ is a $G$-linear function if and only if 
(2) holds.
\end{proof}

\subsection{Theoretical Consequences and Corollaries}

In this subsection, we consider the left regular $G$-set.
Note that,
in the case that $X=G$ is the left regular $G$-set,
we always take $x_0=1_G$ in Equation \eqref{def zeta_x_0 on G}
(in Equation \eqref{def zeta_x_0 on FG}, resp.), so that  $\zeta_{x_0}$
is the identity map on $G$ (on $\F G$, resp.).
Then, for $S\subseteq X=G$, the symbols ${\scr S}=\zeta_{x_0}^{-1}(S)=S$,
$X_S=\zeta_{x_0}(G_{\scr S})=G_S$ (cf. Lemma~\ref{lem def X_S}
and Equation \eqref{X def right stabilizer}), etc.
The notation in Remark~\ref{rk F scr A f} is adopted.

We begin with
 a group version of the sharp uncertainty principle, which is
a consequence of Theorem~\ref{thm act s uncertainty dim}.

\begin{theorem} \label{thm G s uncertainty dim}
Let $G$ be any finite group, $\F$ be any field and $0\ne f\in\F^G$.
Set $S={\rm supp}(f)$ and $S'=G-S$.
Let $G_{{\rm supp}(f)}$ be defined in Equation \eqref{X def right stabilizer}.
Then $\dim\F Gf-\dim\F S'^{-\!1}\!f\geq 1$ and
\begin{align*} 
|{\rm supp}(f)|{\cdot} \dim\F Gf
  \ge|G|+(\dim\F Gf-\dim\F S'^{-\!1}\!f){\cdot}|{\rm supp}(f)| - |G_{{\rm supp}(f)}|;
\end{align*}
the inequality  becomes an equality if and only if $f$ is an $S'^{-1}$-linear function.
\end{theorem}

\begin{proof} Take a $\beta\in S$ and set $\ga=\be^{-1}$.
Then $1_G\in\ga S={\rm supp}(\ga f)$.
We denote $(\ga S)'=G-\ga S$ as usual.
Applying Theorem~\ref{thm act s uncertainty dim} to $\ga f$,
we get that $\dim\F G(\ga f)-\dim\F (\ga S)'^{-1}(\ga f)\geq 1$ and
\begin{align} \label{group sharp up}
\begin{array}{l} \hskip-18pt
  \big|{\rm supp}(\ga f)\big|{\cdot} \dim\F G(\ga f)\; \ge \\[3pt]
  \hskip-10pt
 \big|G\big|+\big(\dim\F G(\ga f)-\dim\F (\ga S)'^{-1}(\ga f)\big)
 {\cdot}\big|{\rm supp}(\ga f)\big| -  \big|G_{{\rm supp}(\ga f)}\big|,
 \end{array}
\end{align}
which becomes an equality if and only if $\ga f$ is a $(\ga S)'^{-\!1}$-linear function.
It is obvious that \vskip-10pt
\begin{equation} \label{eq supp gamma f}
\begin{array}{c}
|{\rm supp}(\ga f)|=|{\rm supp}(f)|, \quad \F G(\ga f)=\F Gf, \\[3pt]
G_{{\rm supp}(\ga f)}=G_{\ga S} =G_{S}=G_{{\rm supp}(f)}.
\end{array}\end{equation}
Further, \vskip-20pt
\begin{equation} \label{eq gamma S'}
\begin{array}{c}
 (\ga S)'=G-\ga S=\ga(G-S)=\ga S',\quad
  (\ga S)'^{-1}=(\ga S')^{-1}=S'^{-1}\ga^{-1}. \\[3pt]
  \F (\ga S)'^{-1}(\ga f)=\F S'^{-1}\ga^{-1}\ga f =\F S'^{-1}\!f.
\end{array}\end{equation}
For $\al,\be\in S'^{-1}$, i.e., for $\al\ga^{-1},\be\ga^{-1}\in (\ga S)'^{-1}$,
it is clear that $(\al\ga^{-1})(\ga S)=\al S$,  $(\be\ga^{-1})(\ga S)=\be S$,
$(\al\ga^{-1})(\ga f)=\al f$ and $(\be\ga^{-1})(\ga f)=\be f$.
By Definition~\ref{def G-linear function},
$\ga f$ is a $(\ga S)'^{-1}$-linear function
if and only if $f$ is an $S'^{-1}$-linear function.
Thus this theorem follows from Equation~\eqref{group sharp up}.
\end{proof}

\begin{corollary} \label{cor sharppend uncertainty}
Let notation be as in Theorem~\ref{thm G s uncertainty dim}. Then
$$
|{\rm supp}(f)|{\cdot} \dim\F Gf
  \ge|G|+ |{\rm supp}(f)| - |G_{{\rm supp}(f)}|;
$$
the inequality becomes an equality if and only if $f$ is an $S'^{-\!1}$-linear function
and $\F f+\F S'^{-\!1}\!f=\F Gf$.
\end{corollary}

\begin{proof}
We keep the notation from the proof of Theorem~\ref{thm G s uncertainty dim}.
By Corollary~\ref{cor act s uncertainty dim},
it remains to show that $\F f+\F S'^{-\!1}\!f=\F Gf$
if and only if $\F(\ga f)+\F(\ga S)'^{-\!1}(\ga f)=\F Gf$.
By Equation~\eqref{eq gamma S'},
$\F(\ga S)'^{-1}(\ga f)=\F S'^{-\!1}\!f$, which is a proper subspace of $\F Gf$.
Thus, $\F f+\F S'^{-1}f=\F Gf$ if and only if $\dim\F Gf-\dim \F S'^{-1}f=1$,
if and only if $\F(\ga f)+\F(\ga S)'^{-1}(\ga f)=\F Gf$.
The proof is completed.
\end{proof}

The next corollary derives from
Corollary~\ref{cor act uncertainty dim}.

\begin{corollary}\label{cor G uncertainty dim}
Let $G$, $\F$, $f$ and $\F Gf$ be defined by Theorem~\ref{thm G s uncertainty dim}.
Then 
\begin{align} \label{eq G uncertainty dim}
 \big|{\rm supp}(f)\big|\cdot\dim\F Gf \ge \big|G\big|;
\end{align}
and the following two statements are equivalent:
\begin{itemize}
\item[{\rm(1)}] The equality in  Equation \eqref{eq G uncertainty dim} holds.

\item[{\rm(2)}] The support ${\rm supp}(f)=\ga H$ ($\ga\in G$)
is a left coset of a subgroup $H$ of $G$,
and there exists an element $c\in\F^{\times}\!$ and a
homomorphism $\phi:{H}\to\F^\times$ such that the restriction
$f|_{\ga H}=c\!\cdot\!(\ga\phi)$, i.e.,
$f(\ga\be)=c\!\cdot\!\phi(\be)$ for all $\be\in H$.
\end{itemize}
\end{corollary}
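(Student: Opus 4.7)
The plan is to specialize Corollary~\ref{cor act uncertainty dim} to the left regular $G$-set $X=G$. The inequality Eq.\eqref{eq G uncertainty dim} is then immediate, since $|X|=|G|$ in Eq.\eqref{eq uncertainty}. For the equivalence (1)$\Leftrightarrow$(2), I would first translate so as to move $1_G$ into the support: pick any $\ga_0\in{\rm supp}(f)$, set $g=\ga_0^{-1}f$ (so that $g(\be)=f(\ga_0\be)$ for all $\be\in G$), and note that by Eq.\eqref{eq supp(alpha f)} we have ${\rm supp}(g)=\ga_0^{-1}{\rm supp}(f)\ni 1_G$, while $|{\rm supp}(g)|=|{\rm supp}(f)|$ and $\F Gg=\F Gf$ (hence $\dim\F Gg=\dim\F Gf$). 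So the equality in Eq.\eqref{eq G uncertainty dim} holds for $f$ iff it holds for $g$, and condition (2) of Corollary~\ref{cor G uncertainty dim} for $f$ (with data $\ga=\ga_0$, $H$, $\eta$, $c$) corresponds to the condition ``${\rm supp}(g)=H$ is a subgroup of $G$ and $g|_H=c\eta$'' for $g$.

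Under this reduction we have $1_G\in{\rm supp}(g)$, so we take $x_0=1_G$ in Corollary~\ref{cor act uncertainty dim}. Since $X=G$ is regular, $\zeta_{x_0}$ is the identity map on $G$, hence ${\scr S}=\zeta_{x_0}^{-1}({\rm supp}(g))={\rm supp}(g)$ and $g^{\!\scr S}=g|_{\scr S}$. The equivalence (1)$\Leftrightarrow$(3) in Corollary~\ref{cor act uncertainty dim} then yields: equality in Eq.\eqref{eq uncertainty} holds for $g$ iff ${\rm supp}(g)$ is a subgroup $H\le G$ and $g|_H=c\eta$ for some $c\in\F^\times$ and $\eta\in{\rm Hom}(H,\F^\times)$. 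Untranslating by $\ga_0$ (i.e.\ replacing $g(\be)$ by $f(\ga_0\be)$) gives Corollary~\ref{cor G uncertainty dim}(2) for the original $f$ with $\ga=\ga_0$; the converse direction just runs the same argument backwards.

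There is no genuine obstacle here, since Corollary~\ref{cor act uncertainty dim} already does the substantive work and only a translation by $\ga_0$ is needed. The one delicate point flagged by the parenthetical remark in the excerpt---that (2) and (3) of Corollary~\ref{cor act uncertainty dim} coincide for the regular $G$-set---can be verified directly: for $X=G$ and $x_0=1_G$, the dynamical identity $\be f=\eta(\be)f$ for $\be\in{\scr S}$ (condition (2)), evaluated at $1_G$ and with $\be\mapsto\be^{-1}$, becomes the pointwise identity $f(\be)=c\eta^{-1}(\be)$ for $\be\in{\scr S}$, which is condition (3) up to replacing $\eta$ by $\eta^{-1}$. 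So either (2) or (3) of Corollary~\ref{cor act uncertainty dim} may be used interchangeably in the reduction, and both produce the form stated in Corollary~\ref{cor G uncertainty dim}(2).
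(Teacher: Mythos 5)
Your proposal is correct and follows essentially the same route as the paper: specialize Corollary~\ref{cor act uncertainty dim} to the left regular $G$-set, translate $f$ by $\ga_0^{-1}$ so that $1_G$ lies in the support (using that this changes neither $|{\rm supp}(f)|$ nor $\dim\F Gf$), invoke the equivalence (1)$\Leftrightarrow$(3) with $x_0=1_G$, and untranslate. Your extra check that conditions (2) and (3) of Corollary~\ref{cor act uncertainty dim} coincide for the regular set (via Remark~\ref{rk G-linear function}, with $\eta$ replaced by $\eta^{-1}$) just confirms the parenthetical remark the paper states without proof.
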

\begin{proof}
Denote $S={\rm supp}(f)$.
Take $\ga\in G$ such that $1_G\in\ga^{-\!1} S={\rm supp}(\ga^{-\!1} f)$.
Applying Corollary~\ref{cor act uncertainty dim} to $\ga^{-\!1} f$, we have
\begin{align} \label{eq G uncertainty dim ga}
 \big|{\rm supp}(\ga^{-\!1} f)\big|\cdot\dim\F G(\ga^{-\!1} f) \ge \big|G\big|.
\end{align}
By Equation~\eqref{eq supp gamma f}, it is exactly the same as
Equation~\eqref{eq G uncertainty dim},
and it becomes an equality if and only if
${\rm supp}(\ga^{-\!1}\! f)=\ga^{-\!1}\! S=H$ is a subgroup of $G$
and there are a $c\in\F^\times$ and a homomorphism
$\phi: H\to\F^{\times}$ such that $(\ga^{-\!1}\! f)|_{H}=c\!\cdot\!\phi$,
i.e., for $\al\in G$,
$\ga^{-\!1}\! f(\al)= \begin{cases} c\!\cdot\!\phi(\al), & \al\in H;\\ 0, & \al\notin H.  \end{cases}$
Note that $\ga^{-\!1}\!f(\ga^{-\!1}\!\al)= f(\ga \ga^{-\!1}\!\al)=f(\al)$.
For $\al\in G$ we have
$
f(\al)=\ga^{-\!1}\!f(\ga^{-\!1}\!\al)
= \begin{cases} c\!\cdot\!\phi(\ga^{-\!1}\!\al), & \ga^{-\!1}\!\al\in H;\\
 0, & \ga^{-\!1}\!\al\notin H.  \end{cases}
$
Note that ${\rm supp}(f)=S=\ga H$ and $\phi(\ga^{-\!1}\!\al)=\ga{\cdot}\phi(\al)$.
We see that
``$(\ga^{-\!1}\! f)|_{H}=c\!\cdot\!\phi$'' is equivalent to
``$f|_{\ga H}=c\!\cdot\!(\ga\phi)$''.
In conclusion, (1) and (2) are equivalent.
\end{proof}



\begin{remark}\label{rek uncertainty} \rm
If $G$ is a finite abelian group,
and Equation \eqref{condition semisimplicity} holds,
with $\F$ containing a primitive $\exp(G)$-th root of unity,
then $\wh f$ is well-defined, and we have
${\rm supp}(\wh f)=\mbox{rk-supp}(\wh f)=\dim\F Gf$
(see Lemma~\ref{lem rk-supp = dim}).
In this case, for any subgroup $H$ of $G$ and any homomorphism
$\phi: H\to\F^\times$, 
there exists a homomorphism $\chi: G\to\F^\times$ 
such that the restriction $\chi|_H=\phi$
(see \cite[Ch.6 Proposition 1]{Serre73}).
Consequently, the statement (2) of Corollary \ref{cor G uncertainty dim}
can be rewritten as $f=c'\chi I_{\ga H}$, where $c'=c\,\chi(\ga)^{-1}$ and
$I_{\ga H}(\al)=\begin{cases}1,&\al\in \ga H;\\0, &\al\notin\ga H.\end{cases}$
This is simply the classical result stated after Equation \eqref{u p DS}.

However, when $G$ is non-abelian,  the situation changes.
 In general, for a homomorphism
$\phi: H\to\F^\times$, there may not exist a
homomorphism  $\chi: G\to\F^\times$ such that $\chi|_{H}=\phi$.
For instance,  if $H$ is contained in
the derivative subgroup (or named the commutator subgroup) of~$G$,
then any homomorphism $\chi:G\to \F^\times$ must satisfy
$\chi(x)=1$ for all $x\in H$. This means
$\chi|_H\ne\phi$ if $\phi: H\to \F^\times$
is not a trivial homomorphism.
\end{remark}

As demonstrated in Remark \ref{rk G abelian}(1),
if the condition in Equation \eqref{condition semisimplicity} is satisfied,
then the Fourier transform $\wh f$ and the rank support $\mbox{rk-supp}(\wh f)$
are defined, and any result of this section
can be reformulated with $\mbox{rk-supp}(\wh f)$
instead of $\dim\F G f$. We list the reformulations of
the results in this subsection.

The following is a reformulation of
Theorem \ref{thm G s uncertainty dim}.

\begin{theorem} \label{thm G s uncertainty supp}
Let $G$ be any finite group and $\F$ be any field
satisfying the conditions in Equation \eqref{condition semisimplicity}.
Let $0\ne f\in\F^G$ and $\wh f$ be the Fourier transform of $f$.
Set $S={\rm supp}(f)$ and $S'=G-S$.
Let $G_{{\rm supp}(f)}$ be defined in Equation~\eqref{X def right stabilizer}.
Then we have that $\dim\F Gf-\dim\F S'^{-\!1}\!f\geq 1$ and
\begin{equation*} \label{eq G s uncertainty supp}
\big|{\rm supp}(f)\big|{\cdot}\, \mbox{\rm rk-supp}({\wh f}\,)
  \ge\big|G\big|+\big(\dim\F Gf-\dim\F S'^{-\!1}\!f\big)
   {\cdot}\big|{\rm supp}(f)\big|-\big|G_{{\rm supp}(f)}\big|,
\end{equation*}
with equality holding if and only if $f$ is an $S'^{-\!1}$-linear function.
\end{theorem}

The following corollary is a reformulation of Corollary~\ref{cor sharppend uncertainty}
which is the sharpened uncertainty principle for any finite groups.


\begin{corollary}\label{cor act s uncertainty supp}
Let $G$, $\F$, $f$, $\wh f$, $S'$ and $G_{{\rm supp}(f)}$
be as in Theorem \ref{thm G s uncertainty supp}.
Then 
\begin{equation*} \label{eq act s uncertainty supp}
\big|{\rm supp}(f)\big|\cdot \mbox{\rm rk-supp}({\wh f}\,)
  \ge\big|G\big|+\big|{\rm supp}(f)\big|-\big|G_{{\rm supp}(f)}\big|;
\end{equation*}
and the inequality becomes an equality
if and only if $f$ is an $S'^{-1}$-linear function
and $\F f+\F S'^{-1}f=\F Gf$.
\end{corollary}



The next one is a reformulation of Corollary~\ref{cor G uncertainty dim},
which is the classical uncertainty principle for finite groups, see
Equation \eqref{u p Me92}.

\begin{corollary}\label{cor G uncertainty supp}
Let $G$, $\F$, $f\in \F^G$ and $\wh f$
be defined as above. Then
\begin{equation*} \label{eq G uncertainty supp}
 \big|{\rm supp}(f)\big|\cdot\mbox{\rm rk-supp}({\wh f}\,) \ge \big|G\big|;
\end{equation*}
and it becomes an equality if and only if the statement
in Corollary~\ref{cor G uncertainty dim}(2) is satisfied.
\end{corollary}

\subsection{Equality in the strong uncertainly principle}
We conclude this section by presenting an explicit characterization of the functions that achieve equality in the strong uncertainty principle in Equation~\eqref{s u p Tao}.
Let $p$ be a prime number and $\omega$ be a primitive
$p$-th root of unity in the complex field $\C$.
The classical Chebotar\"ev theorem states that all square
submatrices of the Fourier matrix $\big(\omega^{ij}\big)_{0\le i,j\leq p-\!1}$
have non-zero determinants (cf. \cite{Chebotarev,SM96}).

\begin{lemma}\label{roots}
Let $p$ be a prime number and $G=\{1,\al,\cdots,\al^{p-1}\}$ be
a cyclic group of order $p$. Let $0\neq f\in\C^G\cong\C G$,
and $\tilde f(z)=\sum_{i=0}^{p-1}f(\al^i)z^i$ be the complex polynomial in variable $z$
associated with $f$. We have
$$
\big|{\rm supp}(f)\big|+\dim\C Gf\geq p+1,
$$
with equality if and only if the degree $\deg(\gcd(\tilde f(z),z^p-1))=|{\rm supp}(f)|-1$,
where $\gcd(-,-)$ denotes the greatest common divisor.
\end{lemma}

\begin{proof} 
The function $f$ corresponds to the row vector
$(f(\alpha^0),f(\alpha^1),\cdots, f(\alpha^{p-1}))$.
Then $\alpha f$ corresponds to the row vector
$
(f(\alpha^{p-1}),f(\alpha^0),\cdots, f(\alpha^{p-2})).
$
In this way, we see that $\dim\C Gf$ is equal to the rank of the circulant matrix
$$
C=\left(
  \begin{array}{ccccc}
    f(\alpha^0) & f(\alpha^1) & \cdots & f(\alpha^{p-1}) \\
    f(\alpha^{p-1}) & f(\alpha^{0}) & \cdots &  f(\alpha^{p-2}) \\
    \cdots & \cdots & \ddots & \cdots  \\
    f(\alpha^{1}) & f(\alpha^{2}) & \cdots  & f(\alpha^{0}) \\
  \end{array}
\right).
$$
Let $\omega$ be a primitive $p$-th root of unity in the complex number field.
Then the polynomial $z^p-1=\prod_{i=0}^{p-1}(z-\omega^i)$ and
$$
 \deg(\gcd(\tilde f(z), z^p-1))=
 \big|\big\{\,\omega^i\;\big|\; 0\le i\le p-1 \mbox{ and } \tilde f(\omega ^i)=0\,\big\}\big|.
$$
It is well known that the rank of $C$ is equal to
$p-\deg(\gcd(\tilde f(z),z^p-1))$  (for example see \cite{Davis}).

Denote $S={\rm supp}(f)$.
Suppose otherwise that $\deg(\gcd(\tilde f(z),z^p-1))\geq |S|$.
For simplifying notation,
let $|S|=r$ and $S=\{\al^{s_1},\cdots,\al^{s_r}\}$
with $0\leq s_1<\cdots<s_r\leq p-1$; i.e.,
$\tilde f(z)=\sum_{k=1}^{r}a_{s_k}z^{s_k}$, where $a_{s_k}=f(\al^{s_k})\ne 0$.
Since it is supposed that $\deg(\gcd(\tilde f(z),z^p-1))\geq |S|=r$,
we can choose
$\omega^{i_1},\omega^{i_2}, \cdots, \omega^{i_r}$,
where $0\leq i_1<\cdots<i_r\leq p-1$,
such that $\tilde f(\omega^{i_j})=0$ for $j=1,\cdots,r$.
That is,
$$
\tilde f(\omega^{i_j}) 
=\sum_{k=1}^ra_{s_k}\omega^{i_js_k}=0, \quad j=1,\cdots,r.
$$
In matrix form,
$$
\left(
  \begin{array}{cccc}
    \omega^{i_1s_1} & \omega^{i_1s_2} & \cdots & \omega^{i_1s_r} \\
   \omega^{i_2s_1} & \omega^{i_2s_2} & \cdots & \omega^{i_2s_r} \\
    \cdots & \cdots & \ddots & \cdots \\
    \omega^{i_rs_1} & \omega^{i_rs_2} & \cdots & \omega^{i_rs_r} \\
  \end{array}
\right)
\left(
  \begin{array}{c}
    a_{s_1} \\
    a_{s_2} \\
    \vdots \\
    a_{s_r} \\
  \end{array}
\right)
=
\left(
  \begin{array}{c}
    0 \\
    0 \\
    \vdots \\
    0 \\
  \end{array}
\right).
$$
By virtue of the aforementioned Chebotar\"ev theorem,
the coefficient matrix is invertible. But $a_{s_1}, \cdots, a_{s_r}$ are non-zero.
That is a contradiction.

It follows that $\deg(\gcd(\tilde f(z),z^p-1))\le |S|-1$. Thus
$$
 \dim\C Gf=p-\deg(\gcd(\tilde f(z),z^p-1))\geq p-|S|+1,
$$
with equality if and only if $\deg(\gcd(\tilde f(z),z^p-1))=|S|-1$.
\end{proof}

Thanks to   Lemma \ref{roots}, we characterize the equality in Equation
\eqref{s u p Tao}.
\begin{theorem}\label{strongequal}
Let $p$ be a prime number and $G$ be a cyclic group of order $p$ generated by $\al$.
Let $0\neq f\in\C^G$ and $\wh f\in\C^{\wh G}$ be the Fourier transform of $f$.
Let $\tilde f(z)=\sum_{i=0}^{p-1}f(\alpha^i)z^i$
be the polynomial in variable $z$ associated with $f$.
Then
\begin{align*}
 \big|{\rm supp}(f)\big| + \big|{\rm supp}(\wh f)\big| \;\geq\; p+1,
\end{align*}
with equality if and only if $\deg(\gcd(\tilde f(z),z^p-1))=|{\rm supp}(f)|-1$.
\qed
\end{theorem}

\section{Conclusion} \label{conclusion}
The uncertainty principle is a fundamental concept
that connects functional behavior across various mathematical areas
and has significant implications in both theoretical and practical
applications of mathematics. In their work,
Feng, Hollmann, and Xiang \cite{FHX19} presented a
sharpened  uncertainty principle applicable to any finite
abelian group  $G$  and for any non-zero function
$0\ne f\in \F^G$
(where $\F$ is a field
such that $\F G$ is semisimple and $\wh f$ is the Fourier transform of $f$
in a splitting field):
\begin{align*} 
 \big|{\rm supp}(f)\big|\cdot\big|{\rm supp}(\wh f)\big| \ge
   |G|+|{\rm supp}(f)|-|G_{{\rm supp}(f)}|.
\end{align*}

In this paper, we extend and strengthen this principle
to a broader context:
any transitive $G$-set $X$,
 where $G$ is any finite group,
 and any non-zero function $f\in\F^X$ for any field $\F$.
To this end, we first assume that $\F G$ is semisimple and
construct the $G$-dual set $\wh X$
corresponding to the $G$-set $X$.
We extend the classical Fourier transformation  to accommodate
$G$-actions, resulting in the Fourier transform
 $\wh f\in\E^{\wh X}$ of the function $f\in\F^X$.

Next, we generalize the quantity
 $|{\rm supp}(\wh f)|$
from finite abelian   groups to the concept of rank support,
denoted by $\mbox{rk-supp}(\wh f)$, for group actions.
We additionally demonstrate that
$\mbox{rk-supp}(\wh f)=\dim \F Gf$,
where $\dim\F Gf$ is the $\F$-dimension of
the submodule $\F Gf$ of the permutation module $\F X$ generated by
the element $f=\sum_{x\in X}f(x)x$.
Therefore, we investigate the  uncertainty principle
 with $\dim \F Gf$ instead of $\mbox{rk-supp}(\wh f)$
and establish the following result:
\begin{equation}\label{concl s u p action}
\begin{split}
\big|{\rm supp}(f)\big|{\cdot} \dim\F Gf
  &\ge\big|X\big|+\big(\dim\F Gf-\dim\F{\scr S}'^{-\!1}\!f\big)
  {\cdot}\big|{\rm supp}(f)\big|
  -\big|X_{{\rm supp}(f)}\big|\\[2pt]
  &\geq\big|X\big|+  \big|{\rm supp}(f)\big|
   - \big|X_{{\rm supp}(f)}\big|,
  \end{split}
\end{equation}
 where ${\scr S}'^{-\!1}$ is a subset of $G$ determined by ${\rm supp}(f)$,
 and $\F{\scr S}'^{-\!1}\!f$ denotes the subspace of $\F X$ spanned
 by the subset ${\scr S}'^{-\!1}\!f=\{\alpha f\,|\,\al\in{\scr S}'^{-\!1}\}$ of $\F X$.
We also determine the necessary and
sufficient conditions for these inequalities
to hold as equalities. Furthermore, we explicitly characterize the functions
 $f\in\F^X$ that satisfy the equality conditions.

One advantage of replacing
$\mbox{rk-supp}(\wh f)$ with $\dim \F Gf$
is that $\F$ can be any field,
without the requirement  for
$\F G$ to be semisimple.
Moreover, by utilizing   $\dim \F Gf$,
we apply the ``translating technique''
within the $G$-set $X$ to
establish our sharp uncertainty principle,
as expressed in Equation \eqref{concl s u p action}.
The conditions under which equalities hold in this equation
can also be defined,
allowing us to derive various versions of
the finite-dimensional uncertainty
principle -- both sharpened and classical -- as corollaries.

A third advantage is that, in some instances,
$\dim \F Gf$ is easier to characterize than
$\mbox{rk-supp}(\wh f)$. Indeed, we
provide a  lower bound on
$\dim\C G f$ for groups $G$   of prime order.
This leads to an explicit  characterization of the
conditions under which equality holds in the
strong uncertainty principle.
Explicitly, let~$G$ be a cyclic group of prime order $p$ generated by $\alpha$,
and $0\neq f\in\C^G$. Denote  $\tilde f(z)=\sum_{i=0}^{p-1}f(\alpha^i)z^i$ .
We show that
\begin{align*}
 \big|{\rm supp}(f)\big| + \big|{\rm supp}(\wh f)\big| \ge p+1,
\end{align*}
with equality if and only if $\deg(\gcd(\tilde f(z),z^p-1))=|{\rm supp}(f)|-1$.


\end{document}